\documentclass[journal,twoside]{IEEEtran}
\usepackage{cite}
\usepackage{amsmath,amssymb,amsfonts}
\usepackage{graphicx}
\usepackage{xcolor}
\usepackage{textcomp}
\usepackage{amsthm}
\usepackage{algorithm}
\usepackage{algpseudocode}

\theoremstyle{plain}
\newtheorem{theorem}{Theorem}
\newtheorem{lemma}{Lemma}

\newtheorem{corollary}{Corollary}

\theoremstyle{definition}
\newtheorem{assumption}{Assumption}
\newtheorem{remark}{Remark}

\makeatletter
\let\NAT@parse\undefined
\makeatother
\usepackage{hyperref}
\hypersetup{colorlinks=true,linkcolor=blue,citecolor=blue,urlcolor=blue}
\usepackage[nameinlink]{cleveref}

\crefname{assumption}{Assumption}{Assumptions}
\Crefname{assumption}{Assumption}{Assumptions}
\crefname{theorem}{Theorem}{Theorems}
\Crefname{theorem}{Theorem}{Theorems}
\crefname{lemma}{Lemma}{Lemmas}
\Crefname{lemma}{Lemma}{Lemmas}
\crefname{proposition}{Proposition}{Propositions}
\Crefname{proposition}{Proposition}{Propositions}
\crefname{corollary}{Corollary}{Corollaries}
\Crefname{corollary}{Corollary}{Corollaries}
\crefname{remark}{Remark}{Remarks}
\Crefname{remark}{Remark}{Remarks}
\crefname{section}{Section}{Sections}
\Crefname{section}{Section}{Sections}
\crefname{appendix}{Appendix}{Appendices}
\Crefname{appendix}{Appendix}{Appendices}
\crefname{figure}{Fig.}{Figs.}
\Crefname{figure}{Fig.}{Figs.}

\def\BibTeX{{\rm B\kern-.05em{\sc i\kern-.025em b}\kern-.08em
    T\kern-.1667em\lower.7ex\hbox{E}\kern-.125emX}}
\begin{document}
\title{Noise-Robust Distributed Optimization Over Directed Graphs With Row Stochastic Matrices}
\author{Yifan Wang, \IEEEmembership{Student Member, IEEE}, Mufeng Wang, and Xianghui Cao, \IEEEmembership{Senior Member, IEEE}
\thanks{Yifan Wang and Xianghui Cao are with the School of Automation,
Southeast University, Nanjing 210096, China (e-mail: evan@seu.edu.cn; xhcao@seu.edu.cn).}
\thanks{Mufeng Wang is with the China Industrial Control Systems Cyber Emergency Response Team, Beijing, 100040, China (wangmufeng1990@163.com).}}

\maketitle

\begin{abstract}
In the presence of information-sharing noise, row-stochastic distributed optimization over directed and unbalanced graphs can suffer not only from noise accumulation in gradient tracking, but also from distortion of the left eigenvector-based gradient scaling used for imbalance compensation. To address these issues, this paper proposes a noise-robust distributed optimization algorithm using only row-stochastic weights,
termed Robust Xi-row (R-Xi-row). The optimization update is driven by the increment of an auxiliary cumulative variable to exclude historical tracking noise, while two decaying gains and a normalized scaling gain are used to
suppress the effect of noisy eigenvector estimation and asymptotically recover the required gradient scaling almost surely. Under strongly convex and smooth objectives and appropriate decaying gains, we prove almost sure convergence to the optimal solution. For polynomially decaying gains, we further establish a near-$\mathcal{O}(k^{-1/3})$ expected convergence rate. Numerical experiments validate the theoretical results.
\end{abstract}

\begin{IEEEkeywords}
Distributed optimization, row-stochastic matrices, information-sharing noise, robust gradient tracking, decaying gains.
\end{IEEEkeywords}

\section{Introduction}\label{sec:introduction}
\IEEEPARstart{E}{mpowered} by rapid advances in communication, sensing and computing technologies, we have witnessed stunning development in network systems, where various optimization problems arise across physically distributed and private datasets to achieve system-level global objectives. A canonical example is the cooperative optimization of a network of agents (potentially coordinated by a central node):
\begin{equation}\label{intro}
 \mathbf{x}^*\in\arg\min_{\mathbf{x}\in\mathbb{R}^d} F(\mathbf{x})\overset{\triangle}{=}\frac{1}{n}\sum_{i=1}^{n}f_i(\mathbf{x}),
\end{equation}
where $f_i(\cdot): \mathbb{R}^d\rightarrow\mathbb{R}$ is the local objective function of the $i$th agent. $\mathbf{x}\in\mathbb{R}^d$ is the optimization variable.
This problem is critical in a variety of fields including distributed learning \cite{luca}, cyber-physical systems \cite{yemini}, resource allocation \cite{zhaozeli}, smart grid \cite{TAC_WYF_An_Operator_Splitting_Scheme}, to name a few. {To address problem \eqref{intro}, distributed optimization has emerged as the preferred paradigm for solving problem \eqref{intro} compared to centralized schemes. This preference stems from the inherent limitations of centralized approaches, which suffer from single-point failure risks (due to reliance on a central coordinator) and communication bottlenecks caused by aggregating high-dimensional data across large-scale networks. As network sizes and variable dimensions grow exponentially, distributed schemes enable scalability, fault tolerance, and decentralized decision-making—critical attributes for modern distributed systems.} For the development of distributed optimization methods, distributed gradient descent (DGD) method has laid a significant foundation \cite{TAC_2009_Nedic_distributed_optimization,
TAC_Nedic_ContrainedConsensusAndOptimnizaiton_2010,
SIOPT_2016_Yuan_DGD,
TSP_2021_Berahas_NEAR_DGD}.

Most DGD-type algorithms rely on undirected or balanced
communication networks. Over a general directed and unbalanced graph,
however, the communication weight matrix is generally not doubly stochastic, and a
direct consensus iteration does not preserve the uniformly weighted
average required by \eqref{intro}. Therefore, different mechanisms have been developed to compensate for the resulting network imbalance.
Subgradient-push and Push-DIGing employ column-stochastic weights
together with push-sum normalization
\cite{TAC_2015_subgradientpush,PUSH_DIGING}, whereas Push--Pull and
its variants combine row- and column-stochastic weights
\cite{push_pull,TCNS2024_Accelerated_pushpull,WANG2025111925,arxiv26_1}.
Another line of work relies only on row-stochastic weights and
estimates the left Perron eigenvector\footnote{Throughout this paper, we simply refer to it as left eigenvector estimate.} of the communication weight matrix to correct
the resulting gradient bias
\cite{Automatica_row_stochastic_Mai,
Linear_convergence_in_optimization_over_directed_graphs_with_row-stochastic_matrices,
TAC_2024_Fast_row_stochastic,xirow+1,arxiv26_2}.
Compared with column-stochastic schemes, these row-stochastic methods
allow each agent to assign weights to the information received from
its in-neighbors without requiring its out-degree information \cite{adhoc_2}. In this
paper, we refer to this class of algorithms as Xi-row methods.
\par The aforementioned studies are built on the exact information exchange, i.e., each agent acquires its in-neighbors' information without any distortion. In practical networks, however, the transmission
may be corrupted by channel noise \cite{jsac_channel_noise},
quantization errors
\cite{TAC_QuantizedDistributedGradientTrackingAlgorithmWithLinearConvergenceinDirectedNetworks},
or deliberately injected privacy noise
\cite{TCNS_ADifferentiallyPrivateMethodforDistributedOptimizationinDirectedNetworksviaStateDecomposition}. To attenuate such perturbations, combining the algorithms with the decaying gains provides an effective means \cite{TSP_2019_An_exact_quantized_decentralized_gradient_descent_algorithm,H.Reisizadeh_2023TAC_Distributedoptimizationovertimevaryinggraphswithimperfectsharingofinformation,TAC_2024_WYQ_Tailoring_Gradient_Methods_for_Differentially_Private_Distributed_Optimization,decaying+1}. For gradient-tracking algorithms, persistent information-sharing noise is more involved because the perturbations entering the gradient-tracking recursion may be retained through its temporal evolution, thereby destroying exact average-gradient tracking. Consequently, robust gradient-tracking schemes have been developed for directed networks. For example, within a Push-Pull-type
architecture, the method in
\cite{CDC_A_Robust_Gradient_Tracking_Method_for_Distributed_Optimization_over_Directed_Networks}
attenuates the accumulated noise effect but generally admits a
steady-state optimization error. Subsequent works further combine
robust gradient tracking with decaying gains to recover exact
optimality under persistent information-sharing noise
\cite{decaying+2,decaying+3,
TAC_GradientTrackingBasedDistributedOptimizationWithGuaranteedOptimalitUnderNoisyInformationSharing}. These methods are developed within Push-Pull-type architectures that
combine row- and column-stochastic mixing. 

By contrast, in the purely row-stochastic Xi-row setting, persistent information-sharing noise introduces an additional
difficulty because the imbalance compensation relies on an online estimate of the left
eigenvector. More specifically, two different errors arise. First,
directly incorporating decaying gains into the conventional Xi-row
gradient-tracking recursion retains the accumulated tracking noise, so
the average gradient cannot be recovered. Second, when the exchanged
left-eigenvector estimates are also noisy, the gradient scaling used
to compensate for the bias induced by row-stochastic mixing becomes distorted.

In this paper, we focus on additive zero-mean and variance-bounded
noise, while the noise affecting the left-eigenvector estimation is
additionally assumed to be uniformly bounded. To address the resulting
coupled effects, we develop a noise-robust row-stochastic distributed
optimization algorithm, termed Robust Xi-row (R-Xi-row). R-Xi-row
redesigns both the gradient-tracking and eigenvector-estimation
mechanisms to exclude historical tracking noise from the effective gradient descent direction and to enable asymptotic recovery of the gradient scaling required for row-stochastic imbalance compensation under noisy left-eigenvector estimation.

1) We characterize the two noise effects specific to row-stochastic
distributed optimization. A direct diminishing-mixing extension of
Xi-row retains the accumulated tracking-noise term even when the
eigenvector estimator is noise-free. When the eigenvector-estimation
dynamics are further corrupted, an additional scaling error enters the
imbalance-compensated gradient direction.

2) We develop the R-Xi-row algorithm using only row-stochastic weights. The increment of an auxiliary cumulative variable is employed as the descent direction to exclude historical tracking noise from the optimization update. For uniformly bounded noise in the  left eigenvector estimation, two decaying gains with different decay rates make the accumulated noise contribution asymptotically negligible relative to the injected eigenvector correction terms, while a normalized scaling gain recovers the required gradient scaling.

3) Under strongly convex and $L$-smooth objectives,
together with the stated assumptions on information-sharing noise and
decaying gains, we prove almost sure convergence to the optimal
solution. For polynomially decaying gains, we further establish an 
$\mathcal O((k+1)^{-m})$ expected convergence rate for the optimality 
gap and squared consensus and tracking errors. The admissible gain 
exponents can be selected such that $m$ approaches $\frac{1}{3}$ 
arbitrarily closely, yielding a near-$\mathcal O(k^{-\frac{1}{3}})$
expected convergence rate. Among existing noise-robust
gradient-tracking studies over directed graphs,
\cite{TAC_GradientTrackingBasedDistributedOptimizationWithGuaranteedOptimalitUnderNoisyInformationSharing}
is particularly relevant to our setting because it also considers
persistent information-sharing noise and incorporates online
left-eigenvector estimation. It establishes exact almost sure
convergence but does not provide an explicit nonasymptotic rate.

The remainder of this paper is organized as follows: We formulate the problem in Section~\ref{sec:problem}. The proposed algorithm is presented in Section~\ref{sec:algorithm}. In Section~\ref{sec:convergence}, we present the convergence analysis of the R-Xi-row algorithm. Numerical experiments are given in Section~\ref{sec:numerical}. Finally, we conclude this paper in Section~\ref{sec:conclusion}.

\textit{Notation:} Throughout this paper, we introduce the following conventions: Vectors and scalars are represented by bold and normal lowercase letters, respectively. $\mathbb{N}$ and $\mathbb{R}_{+}$ denote the set of nonnegative integers and nonnegative real numbers. 
$\mathbb{R}^{d}$ and $\mathbb{R}^{n\times d}$ denote the sets of real vectors of dimension $d$, and real matrices of dimension $n\times d$, respectively. The identity matrix and the vector of ones are represented by $I$ and $\mathbf{1}$ with proper dimensions, respectively. The superscript $\top$ denotes the transpose of a vector or matrix. $\langle\cdot,\cdot\rangle$ denotes the inner product. We denote $\|\mathbf{x}\|$ as the Euclidean norm of a vector $\mathbf{x}$. When the subscript is explicitly shown,
$\|\cdot\|_2$ denotes the Euclidean norm for vectors and the induced
$2$-norm for matrices. The stochastic expectation is denoted by $\mathbb{E}[\cdot]$.

\section{Problem Statement}\label{sec:problem}
In this section, we formulate the distributed optimization problem under information-sharing noise and show that naively incorporating diminishing mixing into the conventional Xi-row algorithm does not remove the effect of such noise.

\subsection{Problem Formulation}
Consider a set of $n$ agents interacting over a directed graph $\mathcal{G}=(\mathcal{V},\mathcal{E})$, where $\mathcal{V}=\{1,2,...,n\}$ denotes the set of agents, and $\mathcal{E}\subseteq\{(i,j): i,j\in\mathcal{V}\}$ denotes the set of ordered pairs representing information-exchange links among agents. If $(i,j)\in\mathcal{E}$, it means that agent $j$ can send its local information to agent $i$, or equivalently, agent $j$ is an in-neighbor of agent $i$. We denote the collection of agent $i$ itself and all of its in-neighbors by $\mathcal{N}_i^{in}$.

The objective is to minimize a global function $F(\mathbf{x}):=\frac{1}{n}\sum_{i=1}^{n}f_i(\mathbf{x})$ as defined in \eqref{intro}. Since $\mathbf{x}$ is a centralized variable, to make it distributively tractable, we equivalently rewrite the problem as follows:
\begin{eqnarray}\label{problem}
\min\limits_{\mathbf{x}_1,\mathbf{x}_2,\cdots,\mathbf{x}_n\in\mathbb{R}^d}\,\,\, \hspace{-7pt}&&\hspace{-7pt}\frac{1}{n}\sum_{i=1}^{n}f_i(\mathbf{x}_i)\nonumber\\ \qquad\,\text{s.t.}\quad \qquad \hspace{-7pt}&&\hspace{-7pt}\mathbf{x}_1=\mathbf{x}_2=\cdots=\mathbf{x}_n
\end{eqnarray}
where $\mathbf{x}_i\in\mathbb{R}^d$ is a local copy of $\mathbf{x}$ for $i=1,2,\cdots,n.$

To reach consensus on an optimal optimization solution $\mathbf{x}^*$ in distributed fashion, agents exchange information, e.g., local estimate of the optimal solution, with their neighbors over the networks. Suppose that each agent $i\in\mathcal{V}$ receives exact information from its in-neighbors, and performs, e.g., distributed gradient descent, to update a local estimate of an optimal solution of \eqref{problem} at each step $k$ as follows:
\begin{eqnarray}\label{dgd}
  \mathbf{x}_{i,k+1}\hspace{-7pt}&=&\hspace{-7pt}\sum_{j=1}^{n}W_{ij}\mathbf{x}_{j,k}-\alpha_k\nabla f_i(\mathbf{x}_{i,k}),
\end{eqnarray}
where $\alpha_k$ is a decaying step size. $W_{ij}$ is the weight of the ordered pair $(i,j)$ defined as $W_{ij}\in(0,1)$ if $j\in\mathcal{N}_i^{in}$, or otherwise $W_{ij}=0$. 
However, when the exchanged information is noisy, it has access to an imperfect weighted average of its neighbors's states at each step $k$, denoted by $\hat{\mathbf{x}}_{i,k}$. More precisely, it reads as\footnote{A more precise model is $\hat{\mathbf{x}}_{i,k}
=\sum_{j=1}^{n}W_{ij}\big(\mathbf{x}_{j,k}+\mathbf{e}_{ij,k}^{x}\big)$ where $\mathbf{e}_{ij,k}^{x}$ denotes the information-sharing noise associated with the transmission from agent $j$ to agent $i$. 
For notational simplicity, we adopt the equivalent aggregated representation as in \eqref{noisy_state}.}
\begin{equation}\label{noisy_state}
  \hat{\mathbf{x}}_{i,k}=\sum_{j=1}^{n}W_{ij}\mathbf{x}_{j,k}+\mathbf{e}_{i,k}^x,
\end{equation}
where $\mathbf{e}_{i,k}^x\in\mathbb{R}^d$ is the information-sharing noise. In the sequel, $\mathbf{\hat{x}}_{i,k}$ is referred to as the noisy mixing state.

Throughout this paper, we impose the following standard assumptions commonly used in the literature \cite{Automatica_row_stochastic_Mai,Linear_convergence_in_optimization_over_directed_graphs_with_row-stochastic_matrices,TAC_2024_Fast_row_stochastic,xirow+1,arxiv26_2,TAC_QuantizedDistributedGradientTrackingAlgorithmWithLinearConvergenceinDirectedNetworks,H.Reisizadeh_2023TAC_Distributedoptimizationovertimevaryinggraphswithimperfectsharingofinformation,decaying+2,wyf,2021_pushu_TAC_push-pull}.

\begin{assumption}[Network assumption]\label{ass:network}
$W=[W_{ij}]$ is a primitive nonnegative matrix and satisfies $W\mathbf{1}=\mathbf{1}$.
\end{assumption}

Under Assumption~\ref{ass:network}, the row-stochastic weight matrix $W$ admits a unique positive left eigenvector $\mathbf r$ associated with the eigenvalue~1, normalized by $\mathbf r^\top \mathbf 1=1$. The vector $\mathbf r$ can be interpreted as the stationary distribution induced by $W$, in the sense that $\mathbf r^\top W=\mathbf r^\top$ and $W^k$ converges to $\mathbf 1\mathbf r^\top$.

\begin{assumption}[Problem assumption]\label{ass:problem}
For each $i\in\mathcal{V}$, the local objective function $f_i$ is $\mu$-strongly convex and $L$-smooth for some $\mu, L>0$.
\end{assumption}



To suppress the information-sharing noise, we introduce another decaying gain $\beta_k$ to the noisy mixing state $\hat{\mathbf{x}}_{i,k}$. Specifically, combining diminishing mixing with gradient descent, we consider the following two-timescale update of
\begin{eqnarray}\label{diminishingmixing2}
  \mathbf{x}_{i,k+1} \hspace{-7pt} &=& \hspace{-7pt}\underbrace{(1-\beta_k)\mathbf{x}_{i,k}+\beta_k\hat{\mathbf{x}}_{i,k}}_{\text{Diminishing mixing}}\underbrace{-\alpha_k\nabla f_i(\mathbf{x}_{i,k})}_{\text{Gradient descent}}.
\end{eqnarray}
The first two terms on the right-hand side of \eqref{diminishingmixing2} constitute the diminishing-mixing mechanism for in-neighbors' information, while the last term performs gradient descent using local objective. It is well known that, in such a two-timescale framework, $\beta_k$ needs to be larger than $\alpha_k$ to ensure the convergence \cite{H.Reisizadeh_2023TAC_Distributedoptimizationovertimevaryinggraphswithimperfectsharingofinformation,TAC_2021_T.Doan_ConvergenceRatesofDistributedGradientMethodsUnderRandomQuantization}. This means that the sum that mixes the local estimate of optimal solution with the noisy mixing state, i.e., $(1-\beta_k)\mathbf{x}_{i,k}+\beta_k\hat{\mathbf{x}}_{i,k}$, changes more dramatically from step-by-step, and hence operates on the ``fast" timescale. Meanwhile, the gradient descent step behaves more conservatively, and so operates on the ``slow" timescale. 
\subsection{Naively Incorporating Diminishing Mixing into Xi-row Does Not Work}
We first recall the original Xi-row algorithm in \cite{Linear_convergence_in_optimization_over_directed_graphs_with_row-stochastic_matrices}: To solve problem \eqref{problem}, each agent $i\in\mathcal{V}$ exchanges three vectors $\mathbf{x}_{i,k}$, $\mathbf{y}_{i,k}$ and $\mathbf{z}_{i,k}$ with its neighbors at each step $k\in\mathbb{N}$, and performs the updates:
\begin{subequations}\label{xirow}
    \begin{eqnarray}
    \mathbf{x}_{i,k+1}\hspace{-7pt}&=&\hspace{-7pt}\sum_{j=1}^{n}W_{ij}\mathbf{x}_{j,k}-\tau \mathbf{z}_{i,k},\label{xirow_a}\\
    \mathbf{y}_{i,k+1}\hspace{-7pt}&=&\hspace{-7pt}\sum_{j=1}^{n}W_{ij}\mathbf{y}_{j,k},\label{xirow_b}\\
    \mathbf{z}_{i,k+1}\hspace{-7pt}&=&\hspace{-7pt}\sum_{j=1}^{n}W_{ij}{\mathbf{z}}_{j,k}+\frac{\nabla f_i(\mathbf{x}_{i,k+1})}{[\mathbf{y}_{i,k+1}]_i}-\frac{\nabla f_i(\mathbf{x}_{i,k})}{[\mathbf{y}_{i,k}]_i},\label{xirow_c}
    \end{eqnarray}
\end{subequations}
where $\tau$ is a constant step size for gradient descent. The algorithm is initialized with $\mathbf{x}_{i,0}\in\mathbb{R}^d$, $\mathbf{y}_{i,0}=[0,...,1,...,0]^{\top}$ where the only nonzero entry is the $i$-th one and $\mathbf{z}_{i,0}=\nabla f_i(\mathbf{x}_{i,0})$. $[\mathbf{y}_{i,k}]_i$ denotes the $i$-th entry of the vector $\mathbf{y}_{i,k}$.

In essence, the updates of \eqref{xirow_a} and \eqref{xirow_c} form a modified version of gradient tracking in \cite{2018_QU_TCNS_Harnessing_Smoothness_to_Accelerate_Distributed_Optimization}, where the gradient differences are scaled by the iterates generated through \eqref{xirow_b}. Note that, according to the Perron-Frobenius theorem \cite{nonnegative_book}, the update in \eqref{xirow_b} converges exponentially to the left eigenvector $\mathbf{r}^{\top}$ of the row-stochastic weight matrix $W$ with respect to (w.r.t.) eigenvalue 1. Hence, $\mathbf{y}_{i,k}\in\mathbb{R}^n$ serves as a local estimate of this left eigenvector at agent $i$. Consequently, scaling the gradients by the iterates generated from \eqref{xirow_b}, as implemented in \eqref{xirow_c}, offsets the imbalance caused by relying solely on row-stochastic weight matrix.

We next show that diminishing mixing, when naively incorporated into Xi-row, fails to eliminate the effect of information-sharing noise. To make it clear, the left eigenvector estimate $\mathbf{y}_{i,k}$ is assumed to be noise-free. This simplification is without loss of the main insight, since the presence of information-sharing noise in the updates of \eqref{xirow_a} and \eqref{xirow_c}, or even in \eqref{xirow_c} alone, is already sufficient to reveal this point.

A natural way for the Xi-row algorithm to handle information-sharing noise is to mimic the two-timescale update of \eqref{diminishingmixing2}. Mathematically, it yields the following modifications:
\begin{subequations}\label{naive combination}
    \begin{eqnarray}
    \mathbf{x}_{i,k+1}\hspace{-7pt}&=&\hspace{-7pt}(1-\beta_k)\mathbf{x}_{i,k}+\beta_k\hat{\mathbf{x}}_{i,k}-\tau \mathbf{z}_{i,k}\label{naive combination a},\\
    \mathbf{y}_{i,k+1}\hspace{-7pt}&=&\hspace{-7pt}\sum_{j=1}^nW_{ij}\mathbf{y}_{j,k}\label{naive combination b},\\
    \mathbf{z}_{i,k+1}\hspace{-7pt}&=&\hspace{-7pt}(1-\beta_k)\mathbf{z}_{i,k}+\beta_k\hat{\mathbf{z}}_{i,k}+\frac{\nabla f_i(\mathbf{x}_{i,k+1})}{[\mathbf{y}_{i,k+1}]_i}-\frac{\nabla f_i(\mathbf{x}_{i,k})}{[\mathbf{y}_{i,k}]_i},\nonumber\\
    &&\label{naive combination c}
    \end{eqnarray}
\end{subequations}
where $\hat{\mathbf{z}}_{i,k}=\sum_{j=1}^{n}W_{ij}\mathbf{z}_{j,k}+\mathbf{e}_{i,k}^{z}$ with $\mathbf{e}_{i,k}^z\in\mathbb{R}^d$ denoting the information-sharing noise appearing in the update of $\mathbf{z}_{i,k}$. The initialization of \eqref{naive combination} is the same as that of \eqref{xirow}.

For notational simplicity, this paper considers the scalar case $x_{i,k}$, $z_{i,k}\in\mathbb{R}$. Accordingly, it follows with $e_{i,k}^x$, $e_{i,k}^z\in\mathbb{R}$. The analysis can be extended to arbitrary dimension $\mathbb{R}^d$ by using the Kronecker product \cite{TAC_2024_Fast_row_stochastic}. 
We introduce the following compact notations:
\begin{equation*}
    \begin{aligned}
    \mathbf{x}_{k} &=[x_{1,k},\ldots,x_{n,k}]^{\top}\in\mathbb{R}^{n},\quad \mathbf{z}_{k} =[z_{1,k},\ldots,z_{n,k}]^{\top}\in\mathbb{R}^{n},\\
    Y_{k}          &=[\mathbf{y}_{1,k},\ldots,\mathbf{y}_{n,k}]^{\top}\in\mathbb{R}^{n\times n},\quad
    \tilde{Y}_{k}  =\mathrm{diag}(Y_{k})\in\mathbb{R}^{n\times n}
    \end{aligned}
\end{equation*}
\begin{equation*}
    \begin{aligned}
    \nabla\mathbf{f}_{k} &=[\nabla f_1(x_{1,k}),\ldots,\nabla f_n(x_{n,k})]^{\top}\in\mathbb{R}^{n},\\
    \nabla\mathbf{f}^*&=[\nabla f_1(x^*),\ldots,\nabla f_n(x^*)]^{\top}\in\mathbb{R}^{n},\\
    \mathbf{e}_{k}^{x} &=[e_{1,k}^{x},\ldots,e_{n,k}^{x}]^{\top}\in\mathbb{R}^{n},\quad \mathbf{e}_{k}^{z} =[e_{1,k}^{z},\ldots,e_{n,k}^{z}]^{\top}\in\mathbb{R}^{n},\\
    \mathbf{e}_{k}^{y} &=[\mathbf{e}_{1,k}^{y},\ldots,\mathbf{e}_{n,k}^{y}]^{\top}\in\mathbb{R}^{n\times n},
    \end{aligned}
\end{equation*}
where $\mathbf{e}_{i,k}^y\in\mathbb{R}^n$ denotes the information-sharing noise appearing in \eqref{re_3a}. The corresponding noisy mixing state of local eigenvector estimation at agent $i$ is given by $\hat{\mathbf{y}}_{i,k}=\sum_{j=1}^{n}W_{ij}\mathbf{y}_{j,k}+\mathbf{e}_{i,k}^y$.

Let's denote the diminishing mixing matrix as 
$${W}_k:{=}(1-\beta_k)I+\beta_kW.$$
Under Assumption~\ref{ass:network}, $W_k$ is also row stochastic and has the same positive left eigenvector $\mathbf{r}$ w.r.t. eigenvalue one. 

The compact form of \eqref{naive combination b} and \eqref{naive combination c} is given by
\begin{eqnarray}
    Y_{k+1}\hspace{-7pt}&=&\hspace{-7pt}{W}Y_k,\label{naive_combin_houmian1a}\\
    \mathbf{z}_{k+1}\hspace{-7pt}&=&\hspace{-7pt}{W}_k\mathbf{z}_k+\beta_k\mathbf{e}_{k}^z+\tilde{Y}_{k+1}^{-1}\nabla \mathbf{f}_{k+1}-\tilde{Y}_k^{-1}\nabla \mathbf{f}_k.\label{naive_combin_houmian1b}
\end{eqnarray}
By premultiplying \eqref{naive_combin_houmian1b} with $\mathbf{r}^{\top}$ and telescoping from $t=0$ to $k$, one can obtain that
\begin{eqnarray}\label{not work}
 \hspace{-10pt}{\mathbf{r}^{\top}} \mathbf{z}_{k\hspace{-1pt}+\hspace{-1pt}1}
  \hspace{-8pt}&=&\hspace{-8pt}\sum_{t=0}^{k}\hspace{-1pt}{\beta_t}\mathbf{r}^{\top}\mathbf{e}_t^z\hspace{-1pt}+\hspace{-1pt}{\mathbf{r}^{\top}}\left(\text{diag}\left({W}^{k+1}Y_0\right)\right)^{\hspace{-1pt}-1}\hspace{-1pt}\nabla \mathbf{f}_{k\hspace{-1pt}+\hspace{-1pt}1}\hspace{-1pt}.
\end{eqnarray}
If the information-sharing is noise-free, i.e., the first term on the right-hand side of \eqref{not work} is absent, taking the limit on both sides of \eqref{not work} yields
\begin{eqnarray}\label{not work_if_noisefree}
   \hspace{-10pt} \lim\limits_{k\rightarrow\infty}{\mathbf{r}^{\top}}\mathbf{z}_{k}\hspace{-7pt}&=&\hspace{-7pt}\lim\limits_{k\rightarrow\infty}{\mathbf{r}^{\top}}\text{diag}\left({W}^{k}Y_0\right)^{-1}\nabla \mathbf{f}_{k}=\mathbf{1}^{\top}\nabla \mathbf{f}_{\infty},
\end{eqnarray}
where the second equality follows from $\lim_{k\rightarrow\infty}{W}^k={\mathbf{1}\mathbf{r}^{\top}}$ and $Y_0=I$. If the noise $\mathbf{e}_k^z$ persistently exists over all the steps $k$, the information-sharing noise $\mathbf{e}_k^z$ is accumulated over the entire history. Hence, no matter whether the noise $\mathbf{e}_k^z$ is suppressed by the decaying gain $\beta_k$ as $k$ goes to infinity, $\mathbf{z}_{k}$ is unable to accurately track the aggregated gradient $\mathbf{1}^{\top}\nabla \mathbf{f}_{k}$ by the conventional gradient-tracking dynamics \eqref{xirow_c}.
Similar observations can be found in prior work on Push-Pull algorithm
\cite{TAC_QuantizedDistributedGradientTrackingAlgorithmWithLinearConvergenceinDirectedNetworks,TCNS_ADifferentiallyPrivateMethodforDistributedOptimizationinDirectedNetworksviaStateDecomposition,CDC_A_Robust_Gradient_Tracking_Method_for_Distributed_Optimization_over_Directed_Networks,TAC_GradientTrackingBasedDistributedOptimizationWithGuaranteedOptimalitUnderNoisyInformationSharing}.

However, the present row-stochastic setting is fundamentally more challenging. In particular, the correction of the intrinsic imbalance introduced by row-stochastic mixing relies on the online left eigenvector estimate. Once information-sharing noise contaminates this recursion, the
gradient scaling gain itself becomes noisy, thereby introducing an
additional error into the imbalance compensation. As a result, the
existing convergence results for robust Push--Pull-type methods do not
directly cover the purely row-stochastic setting considered here,
where the online left-eigenvector estimation is also affected by
information-sharing noise. 

\section{The Proposed Algorithm: R-Xi-row}\label{sec:algorithm}
Motivated by the aforementioned two noise effects in Xi-row-type algorithms, we redesign both the 
gradient-tracking and left-eigenvector estimate dynamics. The resulting algorithm, termed R-Xi-row, is given by 
\begin{subequations}\label{re_3}
       \begin{eqnarray}
      \hspace{-7pt} \mathbf{y}_{i,k+1}\hspace{-7pt}&=&\hspace{-7pt}(1-\lambda_k)\mathbf{y}_{i,k}+\lambda_k\hat{\mathbf{y}}_{i,k}+\gamma_k\mathbf{y}_{i,0},\label{re_3a}\\
       \hspace{-7pt}  {z}_{i,k+1} \hspace{-7pt}&=&\hspace{-7pt}(1-\beta_k){z}_{i,k}+\beta_k\hat{{z}}_{i,k}+\alpha_k \kappa_{i,k}{\nabla f_i({x}_{i,k})},\label{re_3b}\\
       \hspace{-7pt}  {x}_{i,k+1} \hspace{-7pt}&=&\hspace{-7pt}(1-\beta_k){x}_{i,k}+\beta_k\hat{{x}}_{i,k}- ({z}_{i,k+1}-{z}_{i,k}).\label{re_3c}
       \end{eqnarray}
\end{subequations}

The two modifications in \eqref{re_3} address different effects of
information-sharing noise. In \eqref{re_3b}--\eqref{re_3c},
$z_{i,k}$ serves as an auxiliary cumulative variable, while the
optimization update is driven by its increment
$z_{i,k+1}-z_{i,k}$ rather than by $z_{i,k}$ itself. As shown in
Remark~\ref{remark:historical_noise}, this incremental structure
prevents historical tracking noise from directly entering the
descent direction. For the left-eigenvector estimation in \eqref{re_3a}, two decaying
gains $\lambda_k$ and $\gamma_k$ with different decay rates are
introduced. The gain $\lambda_k$ associated with noisy mixing is
required to decay faster than the correction gain $\gamma_k$.
Consequently, the accumulated noise contribution becomes
asymptotically negligible relative to the repeatedly injected
eigenvector information. Since this correction also changes the
magnitude of the left-eigenvector estimate, the scaling gain $\kappa_{i,k}$ in \eqref{re_3b} is designed as follows:
\begin{equation}\label{eq:safeguarded_kappa}
\kappa_{i,k}
=
\begin{cases}
\displaystyle
\frac{\sum_{j=1}^{n}[\mathbf y_{i,k+1}]_j}
{n[\mathbf y_{i,k+1}]_i},
& \text{if}\quad [\mathbf y_{i,k+1}]_i>\varepsilon_{\kappa},\\[3mm]
\kappa_{i,k-1},
& \text{otherwise},
\end{cases}
\end{equation}
where $\varepsilon_{\kappa}>0$ is a prescribed small constant to avoid division by a denominator close to zero, and $\kappa_{i,k}$ is initialized by
$\kappa_{i,-1}=0$. The asymptotic convergence of $\kappa_{i,k}$ is proved
in Lemma~\ref{y_nr}. A complete implementation of R-Xi-row is provided in Algorithm~\ref{alg:R-Xi-row}.

     \begin{algorithm}[t]
    \caption{R-Xi-row algorithm}
    \label{alg:R-Xi-row}
    \begin{algorithmic}[1]
    
    \State \textbf{Input:} Decaying gains
    $\{\lambda_k\}$, $\{\gamma_k\}$, $\{\alpha_k\}$ and $\{\beta_k\}$,
    safeguard threshold $\varepsilon_{\kappa}>0$.
    
    \State \textbf{Initialize:}
    $x_{i,0}\in\mathbb{R}^d$,
    $\mathbf{y}_{i,0}=[0,\ldots,1,\ldots,0]^{\top}$,
    where only the $i$-th entry is one,
    $z_{i,0}\in\mathbb{R}^d$, and $\kappa_{i,-1}=0$,
    $\forall i\in\mathcal V$.
    
    \State \textbf{For} $k=0,1,\ldots$, each agent $i\in\mathcal V$ repeats the following steps:
    
    \Statex \qquad
    Form the noisy mixing states:
    
    \Statex \qquad\qquad
    $\displaystyle
    \hat{\mathbf y}_{i,k}
    \leftarrow
    \sum_{j=1}^{n}W_{ij}\mathbf y_{j,k}
    +\mathbf e^y_{i,k};
    $
    
    \Statex \qquad\qquad
    $\displaystyle
    \hat z_{i,k}
    \leftarrow
    \sum_{j=1}^{n}W_{ij}z_{j,k}
    +e^z_{i,k};
    $
    
    \Statex \qquad\qquad
    $\displaystyle
    \hat x_{i,k}
    \leftarrow
    \sum_{j=1}^{n}W_{ij}x_{j,k}
    +e^x_{i,k};
    $
    
    \Statex \qquad
    $\displaystyle
    \mathbf y_{i,k+1}
    \leftarrow
    (1-\lambda_k)\mathbf y_{i,k}
    +\lambda_k\hat{\mathbf y}_{i,k}
    +\gamma_k\mathbf y_{i,0};
    $
    
    \Statex \qquad
    \textbf{If}
    $[\mathbf y_{i,k+1}]_i>\varepsilon_{\kappa}$
    \textbf{then}
    
    \Statex \qquad\qquad
    $\displaystyle
    \kappa_{i,k}
    \leftarrow
    \frac{\sum_{j=1}^{n}[\mathbf y_{i,k+1}]_j}
    {n[\mathbf y_{i,k+1}]_i};
    $
    
    \Statex \qquad
    \textbf{else}
    
    \Statex \qquad\qquad
    $\displaystyle
    \kappa_{i,k}\leftarrow\kappa_{i,k-1};
    $
    \Statex \qquad \textbf{End if}
    \Statex \qquad
    $\displaystyle
    z_{i,k+1}
    \leftarrow
    (1-\beta_k)z_{i,k}
    +\beta_k\hat z_{i,k}
    +\alpha_k\kappa_{i,k}\nabla f_i(x_{i,k});
    $
    
    \Statex \qquad
    $\displaystyle
    x_{i,k+1}
    \leftarrow
    (1-\beta_k)x_{i,k}
    +\beta_k\hat x_{i,k}
    -(z_{i,k+1}-z_{i,k});
    $
    \State \textbf{End for}
    \State \textbf{Output:} $\{x_{i,k}\}$.
    \end{algorithmic}
    \end{algorithm}

\begin{remark}[Exclusion of historical tracking noise]\label{remark:historical_noise}
  The convergence analysis will establish $\kappa_{i,k}\rightarrow \frac{1}{nr_i}$ in the next section. To isolate the role of the modified gradient tracking dynamics, we temporarily set
$\kappa_{i,k}=\frac{1}{nr_i}$. Then, the compact form of
\eqref{re_3b} becomes
    \begin{eqnarray}
  \mathbf{z}_{k+1}\hspace{-8pt}&=&\hspace{-8pt}W_k\mathbf{z}_k + \beta_k\mathbf{e}_k^z+\frac{\alpha_k}{n}R^{-1}\nabla \mathbf{f}_k,\label{algorithmb}
    \end{eqnarray}
    where $R=\text{diag}\{r_i\}_{i=1}^n$. 
Let $\Delta \mathbf{z}_{k+1}:=\mathbf{z}_{k+1}-\mathbf{z}_k$. By multiplying $\mathbf{r}^{\top}$ on both sides of \eqref{algorithmb}, we obtain
    \begin{eqnarray}\label{2rd_11}
       \mathbf{r}^{\top}\Delta \mathbf{z}_{k+1} \hspace{-7pt}&=&\hspace{-7pt}{\beta_k}\mathbf{r}^{\top}\mathbf{e}_{k}^z+\frac{\alpha_k}{n}{\mathbf{1}^{\top}}\nabla \mathbf{f}_k.
    \end{eqnarray}
    Comparing \eqref{2rd_11} with \eqref{not work}, the conventional
    Xi-row tracking state $\mathbf z_{k+1}$ contains the accumulated
    noise $
    \sum_{t=0}^{k}\beta_t\mathbf r^\top\mathbf e_t^z$,
    whereas the increment $\Delta\mathbf z_{k+1}$ employed by
    R-Xi-row contains only the current perturbation
    $\beta_k\mathbf r^\top\mathbf e_k^z$. Hence, the historical
    tracking noise stored in the cumulative state does not directly
    enter the descent direction in \eqref{re_3c}.
\end{remark}

\begin{remark}[Comparison with state-of-the-art methods]
    Conventional Xi-row-type methods compensate for directed-graph
    imbalance through online left-eigenvector estimation, while their
    original convergence results are established under accurate
    information exchange
    \cite{Linear_convergence_in_optimization_over_directed_graphs_with_row-stochastic_matrices,
    TAC_2024_Fast_row_stochastic}.
    Recent robust gradient-tracking methods allow persistent
    information-sharing noise to affect the optimization and tracking
    variables. In particular,
    \cite{TAC_GradientTrackingBasedDistributedOptimizationWithGuaranteedOptimalitUnderNoisyInformationSharing}
    further incorporates online left-eigenvector estimation, but keeps
    the exchanged eigenvector-estimation variables free of communication
    noise. In this paper,
    R-Xi-row considers a more general row-stochastic setting in which
    information-sharing noise simultaneously affects the optimization,
    gradient-tracking, and left-eigenvector-estimation dynamics.
    Noise in the last dynamics introduces an additional difficulty since
    the resulting estimation error directly distorts the gradient scaling
    used for imbalance compensation.
\end{remark}

We make the following assumption for the considered information-sharing noise.

\begin{assumption}[Noise assumption]\label{ass:noise}
${e}_{i,k}^{x}$, $\mathbf{e}_{i,k}^{y}$ and ${e}_{i,k}^{z}$ are mutually independent for all $i\in\mathcal{V}$ and $k\geq 0$. All the noise terms are zero-mean, and their variances are uniformly bounded by a positive constant $\sigma$. Moreover, the random noise $\mathbf{e}_{i,k}^{y}$ satisfies $\|\mathbf{e}_{i,k}^y\|\leq\tilde{\sigma}_y$, $\forall i\in\mathcal{V}$ and $k\geq 0$, almost surely.
\end{assumption}

\section{Convergence analysis}\label{sec:convergence}
In this section, we will show that the sequences generated by the R-Xi-row algorithm converge almost surely (a.s.) to an optimal solution of the problem in \eqref{problem}. 
\subsection{Proof Sketch and Notations}
We establish the almost sure convergence and the expected convergence rate of the proposed R-Xi-row algorithm. The analysis proceeds in four steps. We first characterize the asymptotic behavior of the left eigenvector estimate dynamics and the gradient scaling gain $\kappa_{i,k}$ in Lemma~\ref{y_nr}. We then derive a linear system of conditional inequalities for the error vector $\mathbf{t}_k$ in Lemma~\ref{lemma4}. Based on this recursion and an auxiliary lemma, Theorem~\ref{almost_sure_conv} establishes almost sure convergence. Finally, by establishing a Lyapunov sequence in expectation, Lemma~\ref{thm:random_lyapunov_drift} and Theorem~\ref{cor:polynomial_rate} provide the non-asymptotic convergence rate in expectation.

We define the following notations for convenience:
\begin{eqnarray*}
\tilde{x}_k\hspace{-7pt}&=&\hspace{-7pt}{\mathbf{r}^{\top}\mathbf{x}_k}\in\mathbb{R},\quad\tilde{\mathbf{x}}_k=\mathbf{1}\tilde{x}_k\in\mathbb{R}^n,\\
\tilde{z}_k\hspace{-7pt}&=&\hspace{-7pt}{\mathbf{r}^{\top}\mathbf{z}_k}\in\mathbb{R},\quad\;\tilde{\mathbf{z}}_k=\mathbf{1}\tilde{z}_k\in\mathbb{R}^n,
\end{eqnarray*}
Let $A:=W-I$. Following the transformed-norm construction in
\cite{TAC_GradientTrackingBasedDistributedOptimizationWithGuaranteedOptimalitUnderNoisyInformationSharing},
there exists a fixed nonsingular transformation matrix
$\bar A\in\mathbb{R}^{n\times n}$ such that the vector norm
$\|\cdot\|_A$ is defined by
\[
\|\mathbf{x}\|_A
\triangleq
\|\bar A\mathbf{x}\|_2,
\qquad
\forall\,\mathbf{x}\in\mathbb{R}^n.
\]
For any $M\in\mathbb{R}^{n\times n}$, we use the corresponding
induced matrix norm
\[
\|M\|_A
\triangleq
\sup_{\mathbf{x}\neq 0}
\frac{\|M\mathbf{x}\|_A}{\|\mathbf{x}\|_A}
=
\|\bar A M\bar A^{-1}\|_2.
\]
The matrix $\bar A$ is introduced only for the convergence analysis,
and its explicit expression is not required.
Following the same transformed-norm argument, $\bar A$ can be
selected such that, for some $\rho_A>0$, it has
\[
\|W_k-\mathbf{1}\mathbf r^\top\|_A
\leq 1-\beta_k\rho_A<1.
\]

Using the above norm, we define
\begin{eqnarray*}
\mathbf{t}_k \hspace{-7pt}&=&\hspace{-7pt}
\begin{bmatrix}
t_{1,k}\\
t_{2,k}\\
t_{3,k}
\end{bmatrix}
\overset{\triangle}{=}
\begin{bmatrix}
F(\tilde{x}_k)-F(x^*)\\
\|\mathbf{x}_k-\tilde{\mathbf{x}}_k\|_A^2\\
\|\mathbf{z}_k-\tilde{\mathbf{z}}_k\|_A^2
\end{bmatrix},
\end{eqnarray*}
where $F(\tilde{x}_k)-F(x^\ast)$ measures the optimality gap,
while the remaining terms $\|\mathbf x_k-\tilde{\mathbf{x}}_k\|_A^2$ and
$\|\mathbf z_k-\tilde{\mathbf z}_k\|_A^2$ quantify the consensus errors of the
optimization variable and the gradient-tracking variable,
respectively.

For notational simplicity, we further define
\[
\Pi_r := I-\mathbf{1}\mathbf{r}^\top,\,\,
\delta_k := \max_{i\in\mathcal V}|nr_i\kappa_{i,k}-1|,
\,\,
\kappa_k := \operatorname{diag}\{\kappa_{i,k}\}_{i\in\mathcal V}.
\]

\subsection{Supporting Lemmas}
In what follows, a few auxiliary lemmas are presented to support the convergence analysis.

\begin{lemma}
[Adapted from Lemma~6 in \cite{2021_pushu_TAC_push-pull}]\label{lemma1}
There exists a constant $c_{A,2}>0$ such that, for any
$\mathbf{x}\in\mathbb{R}^n$, it has
\[
\|\mathbf{x}\|_A\leq c_{A,2}\|\mathbf{x}\|_2.
\]
Moreover, after a proper rescaling of the norm $\|\cdot\|_A$, we have
\[
\|\mathbf{x}\|_2\leq\|\mathbf{x}\|_A.
\]
\end{lemma}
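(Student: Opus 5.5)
The statement is the standard equivalence of norms on a finite-dimensional space, specialized to the transformed norm $\|\mathbf{x}\|_A=\|\bar A\mathbf{x}\|_2$ with $\bar A$ fixed and nonsingular. Both inequalities come directly from the operator-norm bounds of $\bar A$ and $\bar A^{-1}$. The only subtlety is to make sure the rescaling used for the second inequality does not destroy the contraction property $\|W_k-\mathbf{1}\mathbf r^\top\|_A\le 1-\beta_k\rho_A$ fixed earlier.

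For the upper bound, I would write
\[
\|\mathbf{x}\|_A=\|\bar A\mathbf{x}\|_2\le \|\bar A\|_2\|\mathbf{x}\|_2 ,
\]
and set $c_{A,2}:=\|\bar A\|_2$. This constant is positive because $\bar A$ is nonsingular, hence nonzero.

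For the lower bound, I would use $\mathbf{x}=\bar A^{-1}(\bar A\mathbf{x})$, which gives
\[
\|\mathbf{x}\|_2\le \|\bar A^{-1}\|_2\,\|\bar A\mathbf{x}\|_2=\|\bar A^{-1}\|_2\|\mathbf{x}\|_A .
\]
I would then perform the rescaling: replace $\bar A$ by $\bar A':=s\bar A$ with $s:=\max\{1,\|\bar A^{-1}\|_2\}$. Under the new norm $\|\mathbf{x}\|_{A}'=s\|\bar A\mathbf{x}\|_2$, the bound above becomes $\|\mathbf{x}\|_2\le\|\mathbf{x}\|_A'$. The upper bound still holds with $c_{A,2}$ updated to $s\|\bar A\|_2$.

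The step that needs checking is that this rescaling is harmless for the rest of the analysis. Since $s>0$ is a scalar, the induced matrix norm satisfies
\[
\|M\|_A'=\|(s\bar A)M(s\bar A)^{-1}\|_2=\|\bar AM\bar A^{-1}\|_2=\|M\|_A .
\]
So every induced matrix-norm bound is unchanged, in particular the contraction bound with $\rho_A$. Hence the rescaled norm can be used throughout without loss, which is what ``after a proper rescaling'' means. There is no real obstacle beyond verifying this invariance of the induced norm under positive scalar rescaling of $\bar A$.
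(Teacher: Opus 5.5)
Your proposal is correct and follows essentially the same route as the paper, which gives no proof of its own but imports Lemma~6 of the cited push--pull work; that lemma rests on equivalence of norms on $\mathbb{R}^n$ applied to $\|\mathbf{x}\|_A=\|\bar A\mathbf{x}\|_2$, and your constants $c_{A,2}=s\|\bar A\|_2$ with $s=\max\{1,\|\bar A^{-1}\|_2\}$ make that argument explicit. You also check that the induced norm $\|M\|_A=\|\bar AM\bar A^{-1}\|_2$ is unchanged under $\bar A\mapsto s\bar A$, so the contraction $\|W_k-\mathbf{1}\mathbf r^\top\|_A\le 1-\beta_k\rho_A$ survives the rescaling; the paper leaves this implicit, and it is worth keeping.
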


\begin{lemma}[Adapted from Theorem 1 in \cite{TAC_GradientTrackingBasedDistributedOptimizationWithGuaranteedOptimalitUnderNoisyInformationSharing}]\label{lemma2}
Assume that the cost function $F(\cdot)$ is continuously differentiable and that the problem is well-defined with optimal solution $x^*$. Suppose that the following relation holds almost surely for some sufficiently large integer $T\geq0$ and for all $k\geq T$,
\begin{align*}
  \mathbb{E}[\mathbf{t}_{k+1}|\mathcal{F}_k] \leq &(G_k+a_k\mathbf{1}\mathbf{1}^{\top})\mathbf{t}_k-H_k
  \begin{bmatrix}
    \|\nabla F(\tilde{x}_k)\|_2^2 \\
    \|\nabla \bar{f}_k\|_2^2
  \end{bmatrix} + b_k\mathbf{1}
\end{align*}
where 
\begin{align*}
    G_k&=
    \begin{bmatrix}
      1 & \eta_1\alpha_k  & 0 \\
      0 & 1-\eta_2\beta_k & \eta_3\beta_k \\
      0 & 0 & 1-\eta_4\beta_k 
    \end{bmatrix},\\
    H_k&=
    \begin{bmatrix}
      \eta_5\alpha_k & \eta_6\alpha_k-\eta_7\alpha_k^2  \\
      0 & 0  \\
      0 & 0  
    \end{bmatrix},
\end{align*}
with $\eta_i>0, \forall 1\leq i\leq 7$ and $\eta_2, \eta_4\in(0,1)$, while the nonnegative scalar sequences $\{a_k\}$, $\{b_k\}$ and positive sequences $\{\alpha_k\}$ and $\{\beta_k\}$ satisfy $\sum_{k=0}^{\infty}a_k<\infty$ a.s., $\sum_{k=0}^{\infty}b_k<\infty$ a.s., $\sum_{k=0}^{\infty}\alpha_k=\infty$, $\sum_{k=0}^{\infty}\beta_k=\infty$, $\sum_{k=0}^{\infty}\beta_k^2<\infty$, $\sum_{k=0}^{\infty}\frac{\alpha_k^2}{\beta_k}<\infty$ and $\lim_{k\rightarrow \infty}\frac{\alpha_k} {\beta_k}=0$. Then, it has
\par a) $\lim_{k\rightarrow \infty} F(\tilde{x}_k)$ exists and $\lim_{k\rightarrow \infty} \|x_{i,k}-\tilde{x}_k\|=\lim_{k\rightarrow \infty} \|z_{i,k}-\tilde{z}_k\|=0$, $\forall i$, a.s.
\par b) $\lim \inf_{k\rightarrow \infty}\|\nabla F(\tilde{x}_k)\|=0$ holds a.s. Moreover,
if the function $F(\cdot)$ has bounded level sets, then $\{\tilde{x}_k\}$ is bounded and every accumulation point of $\{\tilde{x}_k\}$ is an optimal solution and $\lim_{k\rightarrow\infty} F(x_{i,k})=F(x^*), \forall i\in\mathcal{V}$ a.s.
\end{lemma}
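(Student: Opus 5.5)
The plan is to collapse the vector recursion into a single scalar Lyapunov function, apply the Robbins--Siegmund supermartingale convergence theorem to it, and then revisit the individual rows to get componentwise conclusions. Take $V_k:=t_{1,k}+p\,t_{2,k}+q\,t_{3,k}$ with constants $p,q>0$ chosen below. All three components are nonnegative, since $t_{1,k}=F(\tilde x_k)-F(x^*)\ge 0$. Multiplying the assumed inequality by $[1,\;p,\;q]$ gives
\[
\mathbb E[V_{k+1}\mid\mathcal F_k]\le V_k+\big(\eta_1\alpha_k-p\eta_2\beta_k\big)t_{2,k}+\big(p\eta_3-q\eta_4\big)\beta_k t_{3,k}+c\,a_k V_k-\eta_5\alpha_k\|\nabla F(\tilde x_k)\|_2^2-(\eta_6\alpha_k-\eta_7\alpha_k^2)\|\nabla\bar f_k\|_2^2+(1+p+q)b_k .
\]
Here $c=(1+p+q)/\min\{1,p,q\}$.

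The constants are fixed as follows. Choose $p=1$ and $q\ge 2p\eta_3/\eta_4$, so that the $t_3$ coefficient is at most $-\tfrac{q\eta_4}{2}\beta_k$. Because $\alpha_k/\beta_k\to 0$, after enlarging $T$ we have $\eta_1\alpha_k\le\tfrac{p\eta_2}{2}\beta_k$, so the $t_2$ coefficient is at most $-\tfrac{p\eta_2}{2}\beta_k$. We also have $\alpha_k\to 0$: indeed $\sum\beta_k^2<\infty$ forces $\beta_k\to0$, and then $\alpha_k=o(\beta_k)$. Hence $\eta_6\alpha_k-\eta_7\alpha_k^2\ge 0$ for large $k$, and the $\|\nabla\bar f_k\|$ term can be dropped.

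Robbins--Siegmund then applies with multiplicative perturbation $c\,a_k$ and additive perturbation $(1+p+q)b_k$, both summable a.s. It yields that $V_k$ converges a.s. (so $\{t_{j,k}\}$ are a.s. bounded) and that
\[
\sum_k\beta_k t_{2,k}<\infty,\qquad \sum_k\beta_k t_{3,k}<\infty,\qquad \sum_k\alpha_k\|\nabla F(\tilde x_k)\|^2<\infty\quad\text{a.s.}
\]
Since $\sum\alpha_k=\infty$, the last sum gives $\liminf_k\|\nabla F(\tilde x_k)\|=0$, which is the first half of b).

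To show that each component converges, I would apply Robbins--Siegmund row by row, using the a.s. boundedness just obtained.

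\emph{Third row.} It reads $\mathbb E[t_{3,k+1}\mid\mathcal F_k]\le(1+a_k)t_{3,k}-\eta_4\beta_kt_{3,k}+a_k(t_{1,k}+t_{2,k})+b_k$. The forcing term $a_k(t_{1,k}+t_{2,k})$ is summable a.s. because the $t$'s are bounded. So $t_{3,k}$ converges a.s., and together with $\sum\beta_kt_{3,k}<\infty$ and $\sum\beta_k=\infty$ the limit must be $0$.

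\emph{Second row.} The extra forcing $\eta_3\beta_kt_{3,k}$ is summable by the previous step, so the same argument gives $t_{2,k}\to0$.

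\emph{First component.} Since $t_{1,k}=V_k-p\,t_{2,k}-q\,t_{3,k}$, it converges a.s., i.e. $\lim F(\tilde x_k)$ exists.

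Lemma~\ref{lemma1} gives $\|\mathbf x_k-\tilde{\mathbf x}_k\|_2\le\|\mathbf x_k-\tilde{\mathbf x}_k\|_A$, and similarly for $\mathbf z_k$. This converts $t_{2,k},t_{3,k}\to0$ into $|x_{i,k}-\tilde x_k|\to0$ and $|z_{i,k}-\tilde z_k|\to0$ for every $i$, which proves a).

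For the rest of b), suppose $F$ has bounded level sets. Since $F(\tilde x_k)$ converges, $\{\tilde x_k\}$ stays in a bounded sublevel set. Take a subsequence along which $\|\nabla F(\tilde x_k)\|\to0$ and extract a further convergent subsequence with limit $\bar x$. By continuity of $\nabla F$ we get $\nabla F(\bar x)=0$. In our setting $F$ is convex (Assumption~\ref{ass:problem}), so $\bar x$ is optimal and $\lim_k F(\tilde x_k)=F(\bar x)=F(x^*)$. Consequently every accumulation point $x'$ of $\{\tilde x_k\}$ satisfies $F(x')=F(x^*)$, so it is optimal. Finally, consensus plus continuity of $F$ on the bounded set give $F(x_{i,k})\to F(x^*)$.

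I expect the main obstacle to be the decoupling. The coupling entry $\eta_1\alpha_k$ in row one must be dominated by the contraction $\eta_2\beta_k$ in row two, and this is exactly where $\alpha_k/\beta_k\to0$ is needed. Likewise, the $a_k\mathbf1\mathbf1^\top$ cross terms can only be treated as summable forcing after a.s. boundedness has been established from the aggregate function. The remaining hypothesis $\sum\alpha_k^2/\beta_k<\infty$ does not seem necessary for this argument once the inequality is given, since it is presumably used to produce a summable $b_k$. If the rowwise argument runs into trouble, the fallback would be to track that condition through the estimates.
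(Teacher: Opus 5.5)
The paper contains no proof of this lemma: it is imported from Theorem~1 of the cited work. Your proposal therefore supplies an argument the paper omits, and the argument is correct.

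Weighting the rows by $[1,p,q]$ with $q\ge 2p\eta_3/\eta_4$, and using $\eta_1\alpha_k\le\tfrac{p\eta_2}{2}\beta_k$ for large $k$, makes $V_k$ a Robbins--Siegmund almost-supermartingale. This gives a.s.\ boundedness of $\mathbf t_k$ and finiteness of $\sum_k\beta_k t_{2,k}$, $\sum_k\beta_k t_{3,k}$ and $\sum_k\alpha_k\|\nabla F(\tilde x_k)\|_2^2$. Re-applying Robbins--Siegmund to the third row and then to the second correctly gives $t_{2,k},t_{3,k}\to0$. The forcing terms there are $\eta_3\beta_k t_{3,k}$ and $a_k$ times a.s.\ bounded quantities, and both are summable. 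Convergence of $t_{1,k}=V_k-p\,t_{2,k}-q\,t_{3,k}$ then follows.

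This is the usual vector Robbins--Siegmund template, with a positive weight vector $\pi$ satisfying $\pi^\top G_k\le\pi^\top$. What your self-contained version buys over the bare citation is that it shows exactly which hypotheses are used. In particular, you are right that $\sum_k\alpha_k^2/\beta_k<\infty$ (and $\eta_2,\eta_4<1$) are not needed once the inequality is granted. In the paper that condition is used in the proof of Theorem~\ref{almost_sure_conv}, to make the entries of $A_k$ and $B_k$ summable.

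That same transparency shows two hypotheses you should state explicitly.

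First, convexity in part b) is essential, not a convenience. Under the lemma's literal hypotheses ($F$ merely $C^1$), the optimality claim is false. A sequence frozen in exact consensus at a non-global local minimizer of a coercive nonconvex $F$ satisfies the inequality with $a_k=b_k=0$, yet its accumulation point is not optimal. Your argument is complete under Assumption~\ref{ass:problem}, which holds wherever the paper invokes the lemma.

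Second, Robbins--Siegmund needs $a_k$ and $b_k$ to be $\mathcal F_k$-measurable, which the statement leaves implicit. In the paper's application this is not automatic. There $a_k$ contains $\delta_k$ and $\|\Pi_r\kappa_k\|_A$, which depend on $\mathbf y_{k+1}$ and hence on $\mathbf e^y_k$, and $\mathbf e^y_k$ is not $\mathcal F_k$-measurable. The conditioning field would therefore have to be enlarged to include $\mathbf y_{k+1}$, which the independence in Assumption~\ref{ass:noise} appears to permit.
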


\subsection{Almost Sure Convergence}
Next, we proceed to analyze the convergence of R-Xi-row algorithm. The following analysis is carried out on the probability-one event
on which $\|\mathbf e^y_{i,k}\|\leq\tilde{\sigma}_y$
$\forall i\in\mathcal V$ and $k\geq0$.

\begin{lemma}\label{y_nr}
 Let $\Lambda_k=\sum_{t=0}^{k}\lambda_t$ and $\Gamma_k=\sum_{t=0}^{k}\gamma_t$.  Suppose that
$\{\lambda_k\}$ is nonincreasing and the decaying gains
$\lambda_k$ and $\gamma_k$ satisfy $\sum_{k=0}^{\infty}\lambda_k=\infty$, $\sum_{k=0}^{\infty}\gamma_k=\infty$, $\lim_{k\rightarrow \infty} \frac{1}{{\Gamma_k}} \sup_{0\leq l\leq k}\frac{\gamma_l}{\lambda_l}= 0$, $\lim_{k\rightarrow\infty}\frac{\Lambda_k}{\Gamma_k}=0$ and $\lambda_k\in(0,1]$ for all $k\geq 0$. Under Assumptions~ \ref{ass:network} and \ref{ass:noise}, the gradient scaling gain
 $\kappa_{i,k}$ defined in \eqref{eq:safeguarded_kappa} asymptotically converges almost surely to $\frac{1}{nr_i}$, i.e.,
 $\lim_{k\to\infty}\kappa_{i,k}=\frac{1}{nr_i}$, $\forall i\in\mathcal V$ a.s.
\end{lemma}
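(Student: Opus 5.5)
We write the $\mathbf y$-recursion \eqref{re_3a} in compact form and analyse it as a linear time-varying system. Stacking $Y_k=[\mathbf y_{1,k},\ldots,\mathbf y_{n,k}]^\top$ with $Y_0=I$, it reads
\[
Y_{k+1}=W_k^{\lambda}Y_k+\lambda_k\mathbf e_k^y+\gamma_k I,\qquad W_k^{\lambda}:=(1-\lambda_k)I+\lambda_k W .
\]
Unrolling gives
\[
Y_{k+1}=\Phi(k,0)+\sum_{t=0}^{k}\Phi(k,t+1)\big(\gamma_t I+\lambda_t\mathbf e_t^y\big),
\]
where $\Phi(k,t):=W_k^{\lambda}\cdots W_t^{\lambda}$ and $\Phi(k,k+1)=I$. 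The key observation is that $\kappa_{i,k}$ depends only on the ratio of the row sum of $\mathbf y_{i,k+1}$ to its $i$-th entry. It is therefore scale invariant, and it suffices to show that $Y_{k+1}/\Gamma_k\to\mathbf 1\mathbf r^\top$ a.s. Given this, $[\mathbf y_{i,k+1}]_i\approx \Gamma_k r_i\to\infty$, so the safeguard is eventually inactive. Moreover $\mathbf 1^\top\mathbf y_{i,k+1}/\Gamma_k\to 1$, hence $\kappa_{i,k}\to \frac{1}{n r_i}$.

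\textbf{Step 1 (deterministic part).} Each $W_t^{\lambda}$ is row stochastic with left eigenvector $\mathbf r$. Using the transformed norm in the same way as for $W_k$ (with $\lambda$ in place of $\beta$), we have $\|\Phi(k,t)-\mathbf 1\mathbf r^\top\|_A\le\prod_{s=t}^{k}(1-\rho_A\lambda_s)\le e^{-\rho_A(\Lambda_k-\Lambda_{t-1})}$. Hence
\[
\sum_t\gamma_t\Phi(k,t+1)=\Gamma_k\mathbf 1\mathbf r^\top+\sum_t\gamma_t E_{k,t},\qquad \|E_{k,t}\|\le c\,e^{-\rho_A(\Lambda_k-\Lambda_t)} .
\]
To bound the error sum, write $\gamma_t=(\gamma_t/\lambda_t)\lambda_t\le S_k\lambda_t$ with $S_k:=\sup_{l\le k}\gamma_l/\lambda_l$. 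Then $\sum_t\lambda_t e^{-\rho_A(\Lambda_k-\Lambda_t)}$ is bounded by a constant through a Riemann-sum comparison with $\int e^{-\rho_A s}ds$; this uses $\lambda_t\le1$ and the monotonicity of $\lambda$. The error is therefore $O(S_k)$, which is $o(\Gamma_k)$ by the hypothesis $S_k/\Gamma_k\to0$. The initial term $\Phi(k,0)$ is bounded, so it is $o(\Gamma_k)$ because $\Gamma_k\to\infty$.

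\textbf{Step 2 (noise part).} We work on the probability-one event $\|\mathbf e_{i,k}^y\|\le\tilde\sigma_y$. The products $\Phi$ are row stochastic with nonnegative entries, so their norms are uniformly bounded. This gives $\big\|\sum_t\lambda_t\Phi(k,t+1)\mathbf e_t^y\big\|\le c\,\tilde\sigma_y\Lambda_k=o(\Gamma_k)$ by $\Lambda_k/\Gamma_k\to0$. This crude bound already suffices, and no martingale argument is needed. Combining Steps 1 and 2 yields $Y_{k+1}/\Gamma_k\to\mathbf 1\mathbf r^\top$ entrywise.

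\textbf{Step 3 (safeguard and conclusion).} Since $r_i>0$ and $\Gamma_k\to\infty$, there is a (sample-path-dependent) $K$ with $[\mathbf y_{i,k+1}]_i>\varepsilon_\kappa$ for all $k\ge K$. For such $k$, $\kappa_{i,k}$ is given by the ratio formula, and dividing numerator and denominator by $\Gamma_k$ gives the limit $\frac{1}{n r_i}$. The main obstacle is Step 1, namely controlling the $\gamma$-weighted transient sum uniformly in $k$. The precise role of the condition $\frac{1}{\Gamma_k}\sup_{l\le k}\gamma_l/\lambda_l\to0$ is exactly the comparison $\gamma_t\le S_k\lambda_t$ together with the bounded exponential Riemann sum, and I would verify that bound carefully.
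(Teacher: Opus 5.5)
Your proposal is correct and follows essentially the paper's argument: unroll the $Y$-recursion, use the contraction of the lazy products toward $\mathbf 1\mathbf r^\top$ together with $\gamma_t\le S_k\lambda_t$ and a uniformly bounded weighted geometric sum, absorb the noise through $\Lambda_k/\Gamma_k\to0$, and normalize by $\Gamma_k$. The only difference is bookkeeping. You split by source (deterministic vs.\ noise) and bound all the noise crudely by $O(\Lambda_k)$, whereas the paper splits into $(I-\mathbf 1\mathbf r^\top)Y_{k+1}$ and $\mathbf r^\top Y_{k+1}$ and contracts the disagreement part of the noise; both suffice.

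For the step you flagged, you do not need the exponential comparison or the monotonicity of $\{\lambda_k\}$. The telescoping identity
$\sum_{t=0}^{k}\lambda_t\prod_{s=t+1}^{k}(1-\rho_A\lambda_s)=\rho_A^{-1}\bigl(1-\prod_{s=0}^{k}(1-\rho_A\lambda_s)\bigr)\le\rho_A^{-1}$
gives the uniform bound directly.
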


\begin{proof}
    See Appendix~\ref{app:proof-y-nr}.
\end{proof}

\begin{remark}[Role of the normalized scaling gain]\label{remark2}
Since both $\lambda_k\mathbf e_k^y$ and $\gamma_kY_0$ accumulate in
the eigenvector-estimation recursion, the magnitude of
$[\mathbf y_{i,k}]_i$ grows with the accumulated correction.
Hence, directly using its reciprocal for gradient scaling would lead
to an excessively small step size. Instead, when the
safeguard in \eqref{eq:safeguarded_kappa} is inactive, the normalized
ratio $
\frac{\sum_{j=1}^{n}[\mathbf y_{i,k+1}]_j}
{n[\mathbf y_{i,k+1}]_i}$
is used. By Lemma~\ref{y_nr}, the safeguard is eventually inactive
almost surely, and $
\lim_{k\to\infty}\kappa_{i,k}
=
\frac{1}{nr_i}$ a.s.
Thus, the normalization cancels the common growth of the eigenvector
estimate while recovering the expected gradient scaling by $r_i$.
\end{remark}

With the above preparations, we start to establish a linear system of inequalities for the relation between $\mathbf{t}_{k+1}$ and $\mathbf{t}_k$.

\begin{lemma}\label{lemma4}
Under Assumptions~\ref{ass:network}--\ref{ass:noise}, the sequences generated by R-Xi-row satisfy
\begin{eqnarray}\label{lemma_64}
   \hspace{-27pt}&&\hspace{-7pt}  \mathbb{E}\left[\mathbf{t}_{k+1}|\mathcal{F}_k\right] \leq (G_k+A_k)\mathbf{t}_k-H_k
   \begin{bmatrix}
     \|\nabla F(\tilde{x}_k)\|_2^2 \\
     \|\nabla \bar{f}_k\|_2^2
   \end{bmatrix}+B_k,
 \end{eqnarray}
where $\mathcal{F}_k=\{x_{i,l},\mathbf{y}_{i,l},z_{i,l}:0\leq l\leq k, i\in\mathcal{V}\}$ and $\nabla \bar{f}_k=\frac{1}{n}\sum_{i=1}^{n}\nabla f_i(x_{i,k})$. The elements of the system matrices $G_k$, $A_k$, $H_k$ and $B_k$ are given by
\begin{eqnarray}
   \hspace{-13pt}   A_k  \hspace{-7pt}&=&\hspace{-7pt}
  \begin{bmatrix}\label{38_2}
     8\alpha_kL\delta_k^2 & \frac{2\alpha_kL^2\delta_k^2}{n} & 0\\
     \frac{16nLc_{A,2}^2\alpha_k^2}{\beta_k\rho_A}\|\Pi_r\kappa_k\|_{{A}}^2 & \frac{4\alpha_k^2 L^2c_{{A},2}^2}{\beta_k\rho_A}\|\Pi_r\kappa_k\|_{{A}}^2 & 0\\
     \frac{8nL\alpha_k^2 c_{A,2}^2}{\beta_k\rho_A}\|\Pi_r\kappa_k\|_{{A}}^2 & \frac{2\alpha_k^2 L^2 c_{{A},2}^2 }{\beta_k\rho_A}\|\Pi_r\kappa_k\|_{{A}}^2 & 0
    \end{bmatrix},       \\
    \hspace{-13pt}    G_k  \hspace{-7pt}&=&\hspace{-7pt}
    \begin{bmatrix}\label{37_2}
      1 & \frac{\alpha_k L^2}{n} & 0\\
      0 & 1-\beta_k\rho_A & \frac{2\beta_k}{\rho_A}\|\Pi_r A\|_{{A}}^2 \\
      0 & 0 & 1-\beta_k\rho_A 
    \end{bmatrix},\\
\hspace{-13pt}H_k\hspace{-7pt}&=&\hspace{-7pt}\begin{bmatrix}\label{39_2}
                                  \frac{\alpha_k}{2} & \frac{\alpha_k-L\alpha_k^2}{2} \\
                                  0 & 0 \\
                                  0 & 0 
                                \end{bmatrix},
                                \\
  \hspace{-13pt}  B_k
       \hspace{-7pt} &=&\hspace{-7pt}
      \begin{bmatrix}\label{40_2}
        \frac{\alpha_k\delta_k^2}{n}\|\nabla \mathbf{f}^*\|_2^2 +  L\sigma\beta_k^2\|\mathbf{r}\|_2^2\\
         \frac{8\alpha_k^2 c_{A,2}^2}{\beta_k\rho_A}\|\Pi_r \kappa_k\|_{{A}}^2 \|\nabla \mathbf{f}^*\|_2^2 +2\beta_k^2c_{A,2}^2\|\Pi_r\|_{{A}}^2 n\sigma\\
         \frac{4\alpha_k^2 c_{A,2}^2}{\beta_k\rho_A}\|\Pi_r\kappa_k\|_{{A}}^2 \|\nabla \mathbf{f}^*\|_2^2 +\beta_k^2\|\Pi_r\|_{{A}}^2 c_{{A},2}^2 n\sigma
        \end{bmatrix}.
\end{eqnarray}
\end{lemma}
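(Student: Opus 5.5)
The plan is to first put R-Xi-row in a compact form that exposes the consensus and tracking errors, and then bound each entry of $\mathbf t_{k+1}$ separately. Substituting \eqref{re_3b} into \eqref{re_3c} gives
\begin{align*}
\mathbf z_{k+1}&=W_k\mathbf z_k+\beta_k\mathbf e_k^z+\alpha_k\kappa_k\nabla\mathbf f_k,\\
\mathbf x_{k+1}&=W_k\mathbf x_k-\beta_kA\mathbf z_k+\beta_k(\mathbf e_k^x-\mathbf e_k^z)-\alpha_k\kappa_k\nabla\mathbf f_k.
\end{align*}
Premultiplying by $\mathbf r^\top$ and using $\mathbf r^\top W_k=\mathbf r^\top$ and $\mathbf r^\top A=0$ yields
\[
\tilde x_{k+1}=\tilde x_k+\beta_k\mathbf r^\top(\mathbf e_k^x-\mathbf e_k^z)-\alpha_k\mathbf r^\top\kappa_k\nabla\mathbf f_k .
\]
We then split $\mathbf r^\top\kappa_k\nabla\mathbf f_k=\nabla\bar f_k+\frac1n\sum_i(nr_i\kappa_{i,k}-1)\nabla f_i(x_{i,k})$. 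This makes the optimization update an inexact gradient step on $F$ whose perturbation is controlled by $\delta_k$.

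One technical point must be settled first. $\kappa_k$ depends on $\mathbf y_{k+1}$, hence on $\mathbf e^y_k$. By Assumption~\ref{ass:noise}, however, $\mathbf e^y_k$ is independent of $\mathbf e^x_k,\mathbf e^z_k$. I would therefore condition on $\mathcal F_k$ enlarged by $\mathbf y_{k+1}$, or equivalently use the tower property. Then every cross term between $\kappa_k\nabla\mathbf f_k$ (or any $\mathcal F_k$-measurable quantity) and $\mathbf e^x_k,\mathbf e^z_k$ has zero conditional mean. The bounds then hold with $\delta_k$ and $\|\Pi_r\kappa_k\|_A$ appearing as random coefficients, exactly as in \eqref{38_2}--\eqref{40_2}. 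I expect this measurability issue to be the main conceptual obstacle. The rest is bookkeeping of Young-inequality constants.

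For $t_{1,k+1}$ I would apply the $L$-smoothness descent lemma, $F(\tilde x_{k+1})\le F(\tilde x_k)+\nabla F(\tilde x_k)\Delta+\frac L2\Delta^2$. Taking conditional expectations, the noise contributes $L\sigma\beta_k^2\|\mathbf r\|_2^2$, since the two independent noises have variance at most $\sigma$ each. For the gradient inner product I would write $-ab=\frac12(\|a-b\|^2-\|a\|^2-\|b\|^2)$ with $a=\nabla F(\tilde x_k)$ and $b=\nabla\bar f_k$. This produces the $-\frac{\alpha_k}{2}\|\nabla F(\tilde x_k)\|^2$ and $-\frac{\alpha_k-L\alpha_k^2}{2}\|\nabla\bar f_k\|^2$ terms of $H_k$, together with $\frac{\alpha_kL^2}{n}\|\mathbf x_k-\tilde{\mathbf x}_k\|^2\le\frac{\alpha_kL^2}{n}t_{2,k}$ (using Lemma~\ref{lemma1}). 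The $\delta_k$ term is handled by Young's inequality and $|\frac1n\sum_i(nr_i\kappa_{i,k}-1)\nabla f_i|^2\le\frac{\delta_k^2}{n}\|\nabla\mathbf f_k\|^2$. Next comes the chain $\|\nabla\mathbf f_k\|^2\le2L^2\|\mathbf x_k-\mathbf 1x^*\|^2+2\|\nabla\mathbf f^*\|^2$, followed by $\|\mathbf x_k-\mathbf 1x^*\|^2\le2t_{2,k}+2n|\tilde x_k-x^*|^2$. Finally, $|\tilde x_k-x^*|^2\le\frac2\mu t_{1,k}$ by strong convexity from Assumption~\ref{ass:problem}, with constants absorbed into the $8L\delta_k^2$ entry. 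This gives the first rows of $G_k$, $A_k$ and $B_k$.

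For $t_{2,k+1}$ and $t_{3,k+1}$ I would premultiply by $\Pi_r$ and use $\Pi_rW_k=(W_k-\mathbf 1\mathbf r^\top)\Pi_r$ together with $A\mathbf z_k=A(\mathbf z_k-\tilde{\mathbf z}_k)$. This gives
\begin{align*}
\Pi_r\mathbf x_{k+1}&=(W_k-\mathbf 1\mathbf r^\top)\Pi_r\mathbf x_k-\beta_k\Pi_rA\Pi_r\mathbf z_k-\alpha_k\Pi_r\kappa_k\nabla\mathbf f_k+\beta_k\Pi_r(\mathbf e_k^x-\mathbf e_k^z),\\
\Pi_r\mathbf z_{k+1}&=(W_k-\mathbf 1\mathbf r^\top)\Pi_r\mathbf z_k+\alpha_k\Pi_r\kappa_k\nabla\mathbf f_k+\beta_k\Pi_r\mathbf e_k^z.
\end{align*}
Taking the squared $\|\cdot\|_A$ norm and conditional expectation removes the noise cross terms. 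It leaves a noise variance term of order $\beta_k^2c_{A,2}^2\|\Pi_r\|_A^2n\sigma$. For the deterministic part I would use $\|u+v\|^2\le(1+\beta_k\rho_A)\|u\|^2+(1+\frac1{\beta_k\rho_A})\|v\|^2$ together with $\|W_k-\mathbf 1\mathbf r^\top\|_A\le1-\beta_k\rho_A$. Since $(1+\beta_k\rho_A)(1-\beta_k\rho_A)^2\le1-\beta_k\rho_A$, this gives the diagonal entries of $G_k$. The remaining terms carry the factor $\frac{2}{\beta_k\rho_A}$, giving $\frac{2\beta_k}{\rho_A}\|\Pi_rA\|_A^2t_{3,k}$ and $\frac{2\alpha_k^2c_{A,2}^2}{\beta_k\rho_A}\|\Pi_r\kappa_k\|_A^2\|\nabla\mathbf f_k\|^2$. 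Reusing the same gradient chain as in the $t_1$ bound (with the $2\mu^{-1}$ factor and $L\ge\mu$ merged into $L$ constants) then produces the second and third rows of $A_k$ and $B_k$ in \eqref{38_2} and \eqref{40_2}. The final step is to check that every constant matches or is dominated by the stated ones.
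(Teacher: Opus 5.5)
Your architecture is the paper's: the same compact forms of $\mathbf z_{k+1}$ and $\mathbf x_{k+1}$, the descent lemma on $\tilde x_{k+1}$, projection by $\Pi_r$ with $\Pi_rA\mathbf z_k=\Pi_rA(\mathbf z_k-\tilde{\mathbf z}_k)$, the contraction $\|W_k-\mathbf 1\mathbf r^\top\|_A\le 1-\beta_k\rho_A$, and vanishing noise cross terms. Your measurability remark is more careful than the paper, which silently treats $\kappa_k$ (a function of $\mathbf e_k^y$) as $\mathcal F_k$-measurable. However, the tower property is not ``equivalent'': projecting back onto $\mathcal F_k$ replaces $\delta_k^2$ and $\|\Pi_r\kappa_k\|_A^2$ by their conditional expectations. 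What you actually obtain is the inequality with respect to $\mathcal F_k\vee\sigma(\mathbf y_{k+1})$, which is still a filtration since $\mathbf y_{k+1}$ is $\mathcal F_{k+1}$-measurable.

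The gap is your final step, the claim that every constant matches or is dominated by the stated one. Your chain gives $\|\nabla\mathbf f_k\|_2^2\le 4L^2t_{2,k}+\frac{8nL^2}{\mu}t_{1,k}+2\|\nabla\mathbf f^*\|_2^2$, and $L\ge\mu$ works against you, since $L^2/\mu\ge L$. So every $t_{1,k}$ entry exceeds the stated one by the condition number $L/\mu$: you get $8L^2\delta_k^2/\mu$ instead of $8L\delta_k^2$, and $\frac{16nL^2c_{A,2}^2\alpha_k^2}{\mu\beta_k\rho_A}$ instead of $\frac{16nLc_{A,2}^2\alpha_k^2}{\beta_k\rho_A}$. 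The $t_{2,k}$ entries also double.

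The missing ingredient is the co-coercivity bound the paper uses, Nesterov's Th.~2.1.5 for convex $L$-smooth $f_i$. Apply it at $x^*$ and $\tilde x_k$ and sum using $\sum_i\nabla f_i(x^*)=0$. This gives $\sum_i\|\nabla f_i(\tilde x_k)-\nabla f_i(x^*)\|_2^2\le 2nL\,t_{1,k}$, hence $\|\nabla\mathbf f_k\|_2^2\le 2L^2t_{2,k}+8nLt_{1,k}+4\|\nabla\mathbf f^*\|_2^2$, with no $\mu$ at all. Strong convexity is not needed in this lemma.

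There are further losses elsewhere:
\begin{itemize}
\item Your Young weights $(1+\beta_k\rho_A,\,1+\frac{1}{\beta_k\rho_A})$ only give $\frac{2}{\beta_k\rho_A}$ on the remainder. The paper's choice $\epsilon=\frac{\beta_k\rho_A}{1-\beta_k\rho_A}$ gives exactly $1-\beta_k\rho_A$ and $\frac{1}{\beta_k\rho_A}$, so your $t_3$ row doubles.
\item In the $t_2$ recursion you also drop the second factor $2$ from splitting $\beta_k\Pi_rA(\mathbf z_k-\tilde{\mathbf z}_k)$ and $\alpha_k\Pi_r\kappa_k\nabla\mathbf f_k$. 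You would really get $\frac{4\beta_k}{\rho_A}\|\Pi_rA\|_A^2$, not $\frac{2\beta_k}{\rho_A}\|\Pi_rA\|_A^2$.
\item In the $t_1$ step, Young's inequality on $\langle\nabla F(\tilde x_k),\nabla\tilde f_k-\nabla\bar f_k\rangle$ necessarily consumes part of $-\frac{\alpha_k}{2}\|\nabla F(\tilde x_k)\|_2^2$ or of the $\|\nabla\bar f_k\|_2^2$ term. Also, the smoothness quadratic is $\frac{L\alpha_k^2}{2}\|\nabla\tilde f_k\|_2^2$, not a multiple of $\|\nabla\bar f_k\|_2^2$, so your claimed $-\frac{\alpha_k-L\alpha_k^2}{2}\|\nabla\bar f_k\|_2^2$ does not follow.
\end{itemize}
For the $t_1$ step, the paper applies the polarization identity directly to $\langle\nabla F(\tilde x_k),\nabla\tilde f_k\rangle$. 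It then splits $\|\nabla F(\tilde x_k)-\nabla\tilde f_k\|_2^2\le 2\|\nabla F(\tilde x_k)-\nabla\bar f_k\|_2^2+2\|\nabla\bar f_k-\nabla\tilde f_k\|_2^2$. This keeps $-\frac{\alpha_k}{2}\|\nabla F(\tilde x_k)\|_2^2$ intact. Its Step~1 inequality in fact carries $-\frac{\alpha_k-L\alpha_k^2}{2}\|\nabla\tilde f_k\|_2^2$ there.

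Your route does give a recursion of the same structure with larger constants. That is all Theorem~\ref{almost_sure_conv} (via Lemma~\ref{lemma2} with generic $\eta_i$) and Lemma~\ref{thm:random_lyapunov_drift} use. Even the doubled $(2,3)$ entry is exactly covered by the weight condition $\omega_2>\frac{4\omega_1}{\rho_A^2}\|\Pi_rA\|_A^2$. But it does not prove the lemma with the constants as stated.
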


\begin{proof}
    The proof is provided in Appendix~\ref{app:proof-lemma6}.
\end{proof}

Here, we are ready to show the convergence of R-Xi-row.
\begin{theorem}[Almost sure convergence]\label{almost_sure_conv}
  Let Assumptions~\ref{ass:network}--\ref{ass:noise} hold. Suppose the decaying gains $\lambda_k$ and $\gamma_k$ satisfy the conditions in Lemma~\ref{y_nr}. Suppose further that $\alpha_k$ and $\beta_k$ satisfy $\sum_{k=0}^{\infty}\alpha_k=\infty$, $\sum_{k=0}^{\infty}\beta_k=\infty$,  
  $\sum_{k=0}^{\infty}\beta_k^2<\infty$, $\sum_{k=0}^{\infty}\frac{\alpha_k^2}{\beta_k}<\infty$, $\lim_{k\rightarrow \infty}\frac{\alpha_k}{\beta_k}=0$ with $\beta_k\in(0,1]$ for all $k$, and the coupling condition $\sum_{k=0}^{\infty}\alpha_k\left(\frac{1+\sup_{0\leq l\leq k}\gamma_l/\lambda_l}{\Gamma_k}\right)^2<\infty$
  holds. Then, for every $i\in\mathcal V$, the following limits hold almost surely:
    \begin{align*}
    \lim_{k\to\infty}\|x_{i,k}-\tilde{x}_k\|
    &=
    \lim_{k\to\infty}\|z_{i,k}-\tilde{z}_k\|
    =0,\\
    \lim_{k\to\infty}F(x_{i,k})
    &=F(x^*).
    \end{align*}
\end{theorem}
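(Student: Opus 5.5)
The plan is to feed the recursion of Lemma~\ref{lemma4} into Lemma~\ref{lemma2}. Two gaps must be closed: Lemma~\ref{lemma4} has a general perturbation matrix $A_k$ and vector $B_k$, whereas Lemma~\ref{lemma2} requires the forms $a_k\mathbf 1\mathbf 1^\top$ and $b_k\mathbf 1$ with summable $a_k,b_k$. In addition, the constants of $G_k$ and $H_k$ must be matched to the $\eta_i$.

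First, $G_k$ and $H_k$ already have the required structure. Take $\eta_1=L^2/n$, $\eta_2=\eta_4=\rho_A$ and $\eta_3=2\|\Pi_rA\|_A^2/\rho_A$. Take $\eta_5=\eta_6=1/2$ and $\eta_7=L/2$. If $\rho_A\ge1$, shrink $\rho_A$ so that $\eta_2,\eta_4\in(0,1)$; this only weakens the bound $1-\beta_k\rho_A$ and keeps the inequality valid.

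Next, bound $A_k$ entrywise by $a_k\mathbf 1\mathbf 1^\top$, with $a_k$ the largest entry of $A_k$. By Lemma~\ref{y_nr}, on the probability-one event $\kappa_{i,k}\to1/(nr_i)$, so $\delta_k\to0$ and $\kappa_k$ is bounded. Hence $\|\Pi_r\kappa_k\|_A^2\le C$ for a pathwise constant $C$, using the safeguard: $\kappa_{i,k}$ is either a valid ratio or a held previous value, so it is finite for all $k$. The entries carrying $\alpha_k^2/\beta_k$ are then summable by hypothesis. The first-row entries $8\alpha_kL\delta_k^2$ and $2\alpha_kL^2\delta_k^2/n$ are the delicate ones, since $\sum\alpha_k=\infty$: summability needs a rate for $\delta_k$, not just convergence. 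Likewise $b_k$ collects $\alpha_k\delta_k^2\|\nabla\mathbf f^*\|^2/n$ plus terms of order $\beta_k^2$ and $\alpha_k^2/\beta_k$. The latter two are summable by assumption, so again the issue is $\sum_k\alpha_k\delta_k^2<\infty$.

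The main obstacle is therefore a quantitative version of Lemma~\ref{y_nr}. I would revisit the argument in Appendix~\ref{app:proof-y-nr} and show that, for all sufficiently large $k$ (after which the safeguard is inactive),
\[
\delta_k\le C'\,\frac{1+\sup_{0\le l\le k}\gamma_l/\lambda_l}{\Gamma_k}
\]
pathwise, with $C'$ depending on $\tilde\sigma_y$, $n$ and $W$. The heuristic is that $\mathbf y_{i,k+1}$ splits into a dominant injected component of size about $\Gamma_k$ along $\mathbf 1\mathbf r^\top$ (after mixing), plus errors. The first error is the accumulated bounded noise $\sum_t\lambda_t\mathbf e^y_t$ propagated through contracting products; using the uniform bound $\tilde\sigma_y$, the contraction at rate $\lambda_t$ converts it into an $O(1)$ term. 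The second error is the lag of the consensus of the $\gamma$-injections, which is $O(\sup_l\gamma_l/\lambda_l)$. Dividing the ratio $\sum_j[\mathbf y_{i,k+1}]_j/(n[\mathbf y_{i,k+1}]_i)$ by the $\Gamma_k$ growth then gives the displayed rate for $|nr_i\kappa_{i,k}-1|$. Combined with the coupling condition $\sum_k\alpha_k\bigl((1+\sup_l\gamma_l/\lambda_l)/\Gamma_k\bigr)^2<\infty$, this yields $\sum_k\alpha_k\delta_k^2<\infty$. The finitely many early indices contribute only a finite, pathwise-bounded amount.

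With these choices, Lemma~\ref{lemma4} implies the hypothesis of Lemma~\ref{lemma2} for $k\ge T$, with $\sum a_k<\infty$ and $\sum b_k<\infty$ a.s. The step-size conditions of Lemma~\ref{lemma2} are exactly those assumed here. Part a) of Lemma~\ref{lemma2} gives $\|x_{i,k}-\tilde x_k\|\to0$ and $\|z_{i,k}-\tilde z_k\|\to0$ a.s. For part b), strong convexity of $F$ (Assumption~\ref{ass:problem}) gives bounded level sets and a unique minimizer $x^*$. So every accumulation point of the bounded sequence $\{\tilde x_k\}$ equals $x^*$, hence $\tilde x_k\to x^*$. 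Continuity of $F$ and consensus then give $F(x_{i,k})\to F(x^*)$ for all $i$ a.s.
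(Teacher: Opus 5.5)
Your reduction is the same as the paper's. You match $G_k,H_k$ to Lemma~\ref{lemma2}, dominate $A_k$ and $B_k$ entrywise, and reduce everything to $\sum_k\alpha_k\delta_k^2<\infty$ through a bound $\delta_k\lesssim(1+\sup_{l\le k}\gamma_l/\lambda_l)/\Gamma_k$. The gap is in how you justify that bound.

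The products $\overline W(k:l+1)$ tend to $\mathbf 1\mathbf r^\top$, not to $0$. So only the disagreement part $\Pi_r\mathbf e^y_l$ of the eigenvector noise is contracted. Its consensus part is invariant and accumulates: $\mathbf r^\top Y_{k+1}=(1+\Gamma_k)\mathbf r^\top+\mathbf n_k^\top$ with $\mathbf n_k^\top:=\sum_{l=0}^{k}\lambda_l\mathbf r^\top\mathbf e^y_l$.

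This term does not cancel in the ratio. The numerator of $nr_i\kappa_{i,k}-1$ is $e_i^\top Y_{k+1}(r_i\mathbf 1-e_i)$. The vector $r_i\mathbf 1-e_i$ annihilates $(1+\Gamma_k)\mathbf r^\top$, but it leaves $\mathbf n_k^\top(r_i\mathbf 1-e_i)$, which vanishes only if $\mathbf n_k$ happens to be parallel to $\mathbf r$.

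Using only $\|\mathbf e^y_{i,k}\|\le\tilde\sigma_y$, the best you get is $\|\mathbf n_k\|\le\tilde\sigma_y\Lambda_k$, i.e.\ $\delta_k=O\bigl((1+\Lambda_k+\sup_{l\le k}\gamma_l/\lambda_l)/\Gamma_k\bigr)$. But $\sum_k\alpha_k\Lambda_k^2/\Gamma_k^2<\infty$ does not follow from the coupling condition. The polynomial gains of Remark~\ref{remark3} with $a=0.9$, $b=0.6$, $p=0.6$, $q=0.59$ satisfy every hypothesis of the theorem, yet $\alpha_k\Lambda_k^2/\Gamma_k^2\asymp(k+1)^{-0.92}$. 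So your displayed estimate, with $C'$ depending only on $\tilde\sigma_y$, $n$ and $W$, cannot be obtained from the uniform bound alone. The paper's proof makes the same cancellation when it replaces $\mathbf y_{i,k+1}$ by $\hat{\mathbf y}_{i,k+1}$ in the numerator, so following it does not close this hole.

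The missing ingredient is the zero-mean, independent structure of $\mathbf e^y$ in Assumption~\ref{ass:noise}, which your sketch never uses. With it, $\mathbf n_k$ is a martingale whose increments have norm at most $\lambda_l\tilde\sigma_y$.
\begin{itemize}
\item If $\sum_k\lambda_k^2<\infty$, the martingale converges almost surely, hence is pathwise bounded. Your bound then holds with a random constant $C'(\omega)$, which is enough for an almost sure conclusion.
\item If $\sum_k\lambda_k^2=\infty$ (allowed, e.g.\ $p\le\frac12$ in Remark~\ref{remark3}), $\mathbf n_k$ need not be bounded. Instead use $\mathbb E\|\mathbf n_k\|^2\le\tilde\sigma_y^2\sum_{l\le k}\lambda_l^2$ and require $\sum_k\alpha_k\Gamma_k^{-2}\sum_{l\le k}\lambda_l^2<\infty$. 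This gives $\sum_k\alpha_k\|\mathbf n_k\|^2/\Gamma_k^2<\infty$ almost surely.
\end{itemize}
This last condition holds for every polynomial choice in Remark~\ref{remark3}. It is not among the stated hypotheses, however, so it must be derived or added.

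The rest of your plan is sound: the choice of the $\eta_i$, the entrywise domination of $A_k$ and $B_k$, and the use of strong convexity in part b) of Lemma~\ref{lemma2}.
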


\begin{proof}
By Lemma~\ref{y_nr}, for any $\varepsilon>0$, there exists a finite integer $T(\varepsilon)$ such that $\kappa_{i,k}\in\left[ \frac{1-\varepsilon}{nr_i},\frac{1+\varepsilon}{nr_i} \right]$ a.s. for all $i\in\mathcal{V}$ and $k\geq T(\varepsilon)$. Hence, there exists a finite constant $C>0$ such that $\|\Pi_r\kappa_k\|_A\leq C$ a.s. Notice that $\sum_{k=0}^{\infty}\beta_k^2<\infty$ and $\sum_{k=0}^{\infty}\frac{\alpha_k^2}{\beta_k}<\infty$, that is, all the terms in $A_k$ and $B_k$ except $\frac{\alpha_k\delta_k^2}{n}\|\nabla \mathbf{f}_k\|_2^2$ are summable. According to Lemma~\ref{lemma2}, for a sequence in the form like \eqref{lemma_64}, it remains to verify if $\sum_{k=0}^{\infty}\alpha_k\delta_k^2<\infty$ is summable under the conditions of decaying gains in the statement of the theorem.

Recalling that $\delta_k:=\max_{i\in\mathcal{V}}|nr_i\kappa_{i,k}-1|$, we next start to analyze 
\(|nr_i\kappa_{i,k}-1|\). Let \(\hat{\mathbf{y}}_{i,k+1}\in\mathbb{R}^n\) denote the column vector formed by the entries of the \(i\)-th row of \(\hat{Y}_{k+1}\) where $\hat{Y}_k:=(I-\mathbf{1}\mathbf{r}^{\top})Y_k$ is given in the proof of Lemma~\ref{y_nr}. With a slight abuse of notation, $e_i$ is used to denote the vector $[0,...,1,...,0]^{\top}$ where the only nonzero entry is the $i$-th one. 

By Lemma~\ref{y_nr}, on the probability-one event, there exists a sufficiently large integer \(T\ge 0\) and a constant \(c_1\in(0,1)\) such that, for all \(k\ge T\) and all \(i\in\mathcal{V}\), it has $[\mathbf{r}^\top Y_{k+1}]_i \ge c_1\Gamma_k r_i$ and $|[\hat{\mathbf{y}}_{i,k+1}]_i|
\le \frac{c_1}{2}\Gamma_k r_i$. 
Hence, for all $k\ge T$, we derive 
\begin{equation}
    e_i^\top \mathbf{y}_{i,k+1}=[\mathbf{r}^\top Y_{k+1}]_i+[\hat{\mathbf{y}}_{i,k+1}]_i
    \ge \frac{c_1}{2}\Gamma_k r_i >0. 
\end{equation}
Since $\Gamma_k\rightarrow\infty$ and $r_i>0$ for every
$i\in\mathcal V$, by further enlarging $T$ if necessary, we have $\frac{c_1}{2}\Gamma_k r_i>\varepsilon_{\kappa}$,
$\forall i\in\mathcal V, k\geq T$.
Therefore, the safeguard in \eqref{eq:safeguarded_kappa} is
inactive for all $k\geq T$, and $\kappa_{i,k}$ is given by the first branch of \eqref{eq:safeguarded_kappa}. Meanwhile, there exists a constant \(c_2>0\) such that \begin{equation}
  \big|r_i\mathbf{1}^\top \hat{\mathbf{y}}_{i,k+1}-e_i^\top \hat{\mathbf{y}}_{i,k+1}\big|
\le c_2\|\hat{\mathbf{y}}_{i,k+1}\|_\mathbf{r}
\le c_2\|\hat{Y}_{k+1}\|_\mathbf{r}.
\end{equation}
Therefore, for all \(k\ge T\), one can obtain
\begin{align}\label{25_38}
|nr_i\kappa_{i,k}-1|
&=
\left|
\frac{r_i\mathbf{1}^\top \mathbf{y}_{i,k+1}-e_i^\top \mathbf{y}_{i,k+1}}
{e_i^\top \mathbf{y}_{i,k+1}}
\right|
\nonumber\\
&=
\left|
\frac{r_i\mathbf{1}^\top \hat{\mathbf{y}}_{i,k+1}-e_i^\top \hat{\mathbf{y}}_{i,k+1}}
{e_i^\top \mathbf{y}_{i,k+1}}
\right|\le
\frac{2c_2}{c_1r_i}
\frac{\|\hat{Y}_{k+1}\|_\mathbf{r}}{\Gamma_k}.
\end{align}
Taking the maximum over \(i\in\mathcal{V}\) and substituting \eqref{new_1} into \eqref{25_38}, we have
\begin{equation}\label{eq:delta_bound}
  \delta_k
\le
\frac{C_0}{\Gamma_k}
+
\frac{C_1}{\Gamma_k}\sup_{0\le l\le k}\frac{\gamma_l}{\lambda_l},
\qquad \forall k\ge T,
\end{equation}
where $C_0$ and $C_1$ are two positive constants.

Thus, under the condition $\sum_{k=0}^{\infty}\alpha_k\left(\frac{1+\sup_{0\leq l\leq k}\gamma_l/\lambda_l}{\Gamma_k}\right)^2<\infty$, we have that $\sum_{k=0}^{\infty}\alpha_k\delta_k^2<\infty$ a.s. Based on the above analysis, it is observed that the decaying gains given in Theorem~\ref{almost_sure_conv} satisfy the statement as required in Lemma~\ref{lemma2}. Therefore, all the summability conditions required by
Lemma~\ref{lemma2} are satisfied almost surely. Applying
Lemma~\ref{lemma2} to \eqref{lemma_64} yields
$
\lim_{k\to\infty}\|x_{i,k}-\tilde{x}_k\|
=
\lim_{k\to\infty}\|z_{i,k}-\tilde{z}_k\|
=0$,
$\lim_{k\to\infty}F(x_{i,k})=F(x^*)$ $\forall i\in\mathcal V$ a.s.
This completes the proof.
\end{proof}

By Lemma~\ref{y_nr} and Theorem~\ref{almost_sure_conv}, the roles of different decaying gains in R-Xi-row can be interpreted as follows: (i) In \eqref{re_3a}, $\lambda_k$ regulates the consensus in the distributed estimation of the left
eigenvector, while $\gamma_k$ controls the correction by $Y_0$.
The conditions
$\sum_{k=0}^{\infty}\lambda_k=\infty$ and
$\sum_{k=0}^{\infty}\gamma_k=\infty$
ensure persistent excitation of these two processes.
Premultiplying \eqref{recursive_Y} by $\mathbf r^\top$
shows that the accumulated noise in $\mathbf r^\top Y_{k+1}$ is
$\sum_{\ell=0}^{k}\lambda_\ell\mathbf r^\top\mathbf e_\ell^y$.
Since $\mathbf e_\ell^y$ is uniformly bounded,
$\Lambda_k/\Gamma_k\to0$ makes this term negligible after
normalization by $\Gamma_k$, leading to \eqref{25_1}. Meanwhile,
\eqref{new_1} shows that
$\Gamma_k^{-1}\sup_{0\le\ell\le k}\gamma_\ell/\lambda_\ell\to0$
makes $\|\hat Y_{k+1}\|_{\mathbf r}/\Gamma_k$ vanish, as in
\eqref{24_1}. Together, these results give
$Y_k/\Gamma_k\to\mathbf1\mathbf r^\top$ and hence
$\kappa_{i,k}\to\frac{1}{nr_i}$;
(ii) $\alpha_k$ and $\beta_k$ control the optimization and
consensus or tracking dynamics in
\eqref{re_3b}--\eqref{re_3c}. In the proof of
Lemma~\ref{thm:random_lyapunov_drift}, the consensus and tracking
contraction is of order $\beta_k$, whereas the optimization
dynamics introduces coupling terms of orders $\alpha_k$ and
$\frac{\alpha_k^2}{\beta_k}$. Thus, $\frac{\alpha_k}{\beta_k}\to0$ makes these
terms asymptotically small relative to the network contraction,
giving the required time-scale separation. The nonsummability of
$\alpha_k$ and $\beta_k$ keeps both dynamics active,
$\sum_{k=0}^{\infty}\beta_k^2<\infty$ limits the accumulation of information-sharing noise, and
$\sum_{k=0}^{\infty}\frac{\alpha_k^2}{\beta_k}<\infty$ makes the
optimization-induced coupling summable;
(iii) The condition
$
\sum_{k=0}^{\infty}\alpha_k
\left(
\frac{1+\sup_{0\le\ell\le k}\gamma_\ell/\lambda_\ell}
{\Gamma_k}
\right)^2<\infty
$
limits the cumulative bias caused by using inexact gradient scaling.
Since \eqref{eq:delta_bound} implies
$
\delta_k=
O\!\left(
\frac{1+\sup_{0\le\ell\le k}\gamma_\ell/\lambda_\ell}
{\Gamma_k}
\right),
$
this condition gives 
$\sum_{k=0}^{\infty}\alpha_k\delta_k^2<\infty$,
therewith the transient gradient-scaling error is suppressed.

\begin{remark}[Practical selection of decaying gains]\label{remark3}
Specific settings of the decaying gains in the statement of Theorem~\ref{almost_sure_conv} can be the following polynomially decaying gains: 
$\lambda_k=\frac{c_\lambda}{(k+1)^p}$, 
$\gamma_k=\frac{c_\gamma}{(k+1)^q}$, 
$\alpha_k=\frac{c_\alpha}{(k+1)^a}$ and 
$\beta_k=\frac{c_\beta}{(k+1)^b}$
where $c_\lambda,c_\beta\in(0,1]$ and $c_\gamma,c_\alpha>0$ are constants, and the exponents satisfy $0<q<p<1,\ \frac12<b<a<1,\ a>2p-1$ and $2a-b>1$. 
For the exponent conditions of these polynomial gains, 
$q<p$ gives $\Lambda_k/\Gamma_k\to0$, while $p<1$ gives
$\Gamma_k^{-1}\sup_{0\le\ell\le k}\gamma_\ell/\lambda_\ell\to0$.
For $\alpha_k$ and $\beta_k$, the requirements $b<a$, $b>\frac{1}{2}$ and $2a-b>1$ imply
$\frac{\alpha_k}{\beta_k}\to0$, 
$\sum_{k=0}^{\infty}\beta_k^2<\infty$ and
$\sum_{k=0}^{\infty}\frac{\alpha_k^2}{\beta_k}<\infty$, respectively. Moreover,
\eqref{eq:delta_bound} gives
$\delta_k=O((k+1)^{-(1-p)})$, hence $a>2p-1$ ensures
$\sum_{k=0}^{\infty}\alpha_k\delta_k^2<\infty$. Therefore, in practical tuning, one may first choose
$0<q<p<1$, then choose $b>\frac{1}{2}$, and finally let
$a\in(b,1)$ such that $a>2p-1$ and $2a-b>1$. The coefficients $c_\lambda$ and
$c_\beta$ are selected in $(0,1]$ as required by
Lemma~\ref{y_nr} and Theorem~\ref{almost_sure_conv}, while
$c_\gamma$ and $c_\alpha$ can be adjusted to tune the transient
behavior.
\end{remark}

\subsection{Expected Convergence Rate}
Theorem~\ref{almost_sure_conv} establishes the almost sure convergence of R-Xi-row in the asymptotic regime. We next investigate its convergence rate in expectation. To this end, we construct a Lyapunov sequence and derive a deterministic recursion for its expectation. For notational convenience, define
\begin{equation}\label{delta_bar_define}
  \bar{\delta}_k:=\frac{C_0}{\Gamma_k}+
\frac{C_1}{\Gamma_k}\sup_{0\le l\le k}\frac{\gamma_l}{\lambda_l}
\end{equation}
which coincides with the right-hand side of \eqref{eq:delta_bound}. By Lemma~\ref{y_nr} and the bound in \eqref{eq:delta_bound}, there exists a sufficiently large integer \(T\ge 0\) and a constant \(M_\kappa>0\) such that for all \(k\ge T\), one has $\delta_k \le \bar\delta_k$ and $\|\Pi_r \kappa_k\|_A \le M_\kappa$ almost surely.

\begin{lemma}
\label{thm:random_lyapunov_drift}
Define the random Lyapunov sequence
\[
\Phi_k := t_{1,k} + \omega_1 t_{2,k} + \omega_2 t_{3,k},
\]
where $\omega_1>0$ and $\omega_2 > \frac{4\omega_1}{\rho_A^2}\|\Pi_r A\|_A^2$. Suppose Assumptions~\ref{ass:network}--\ref{ass:noise} hold, the sequences $\{\lambda_k\}$ and $\{\gamma_k\}$ satisfy the conditions in Lemma~\ref{y_nr}, and $\{\alpha_k\}$ and $\{\beta_k\}$ satisfy $\lim_{k\to\infty}\frac{\alpha_k}{\beta_k}=0$. Then, for any fixed $\eta\in(0,\mu)$, there exist constants $C_1,C_2,C_3>0$ and a sufficiently large integer $T\geq 0$ such that, for all $k\geq T$,
\begin{equation}\label{phir}
  \mathbb E[\Phi_{k+1}| \mathcal F_k]
\le
(1-\eta\alpha_k)\Phi_k
+C_1\alpha_k\bar\delta_k^2
+C_2\frac{\alpha_k^2}{\beta_k}
+C_3\beta_k^2.
\end{equation}
\end{lemma}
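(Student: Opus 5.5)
We start from the componentwise recursion of Lemma~\ref{lemma4} and take the inner product with the weight vector $[1,\ \omega_1,\ \omega_2]$. The target is a scalar recursion for $\Phi_k$. For $k$ beyond the $T$ fixed before the lemma, we use $\delta_k\le\bar\delta_k$ and $\|\Pi_r\kappa_k\|_A\le M_\kappa$ almost surely. This turns every random coefficient in $A_k$ and $B_k$ into a deterministic multiple of $\alpha_k\bar\delta_k^2$, $\alpha_k^2/\beta_k$ or $\beta_k^2$. Grouping the terms of $\mathbb E[\Phi_{k+1}\mid\mathcal F_k]$ by $t_{1,k}$, $t_{2,k}$ and $t_{3,k}$ then gives the following coefficients.
\begin{itemize}
\item[(i)] $t_{1,k}$ gets $1+8\alpha_kL\bar\delta_k^2+c\,\alpha_k^2/\beta_k$.
\item[(ii)] $t_{2,k}$ gets $\frac{\alpha_kL^2}{n}(1+2\bar\delta_k^2)+\omega_1(1-\beta_k\rho_A)+c\,\alpha_k^2/\beta_k$.
\item[(iii)] $t_{3,k}$ gets $\omega_1\frac{2\beta_k}{\rho_A}\|\Pi_rA\|_A^2+\omega_2(1-\beta_k\rho_A)$.
\end{itemize}
The subtracted term $H_k[\cdot]$ contributes $-\frac{\alpha_k}{2}\|\nabla F(\tilde x_k)\|^2-\frac{\alpha_k-L\alpha_k^2}{2}\|\nabla\bar f_k\|^2$. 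For large $k$ we have $L\alpha_k<1$, so the second piece is nonpositive and can be dropped.

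The contraction in $t_{1,k}$ comes from strong convexity. Since $F$ is $\mu$-strongly convex, the Polyak--{\L}ojasiewicz inequality $\|\nabla F(\tilde x_k)\|^2\ge 2\mu\,t_{1,k}$ holds. Hence the $H_k$ term gives at least $-\mu\alpha_k t_{1,k}$, and the coefficient of $t_{1,k}$ becomes $1-\mu\alpha_k+8L\alpha_k\bar\delta_k^2+c\alpha_k^2/\beta_k$. Lemma~\ref{y_nr} and its proof give $\bar\delta_k\to0$, and by hypothesis $\alpha_k/\beta_k\to0$. So for $k\ge T$ (enlarging $T$) the perturbations are at most $(\mu-\eta)\alpha_k$, and the coefficient is at most $1-\eta\alpha_k$.

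For $t_{3,k}$, the condition $\omega_2>\frac{4\omega_1}{\rho_A^2}\|\Pi_rA\|_A^2$ gives $\omega_1\frac{2}{\rho_A}\|\Pi_rA\|_A^2<\frac{\omega_2\rho_A}{2}$. The coefficient is therefore at most $\omega_2(1-\frac{\rho_A}{2}\beta_k)$. For $t_{2,k}$, the coefficient is $\omega_1(1-\rho_A\beta_k)+O(\alpha_k)+O(\alpha_k^2/\beta_k)$, which is at most $\omega_1(1-\frac{\rho_A}{2}\beta_k)$ once $\alpha_k/\beta_k$ is small. In both cases, $\alpha_k/\beta_k\to0$ gives $\frac{\rho_A}{2}\beta_k\ge\eta\alpha_k$ eventually, so both coefficients are bounded by $(1-\eta\alpha_k)$ times their weight. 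Summing the three pieces yields $(1-\eta\alpha_k)\Phi_k$.

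The constant terms come from $B_k$. The first entry gives $\frac{\alpha_k\bar\delta_k^2}{n}\|\nabla\mathbf f^*\|^2+L\sigma\|\mathbf r\|^2\beta_k^2$. The weighted second and third entries give $(8\omega_1+4\omega_2)c_{A,2}^2M_\kappa^2\|\nabla\mathbf f^*\|^2\frac{\alpha_k^2}{\beta_k\rho_A}$ plus $O(\beta_k^2)$. These produce the constants $C_1,C_2,C_3$ in \eqref{phir}.

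The step I expect to be delicate is the cross term. In Lemma~\ref{lemma4} the $(1,1)$ entry of $H_k$ multiplies $\|\nabla F(\tilde x_k)\|^2$ with coefficient $\alpha_k/2$, but using part of it against the $\delta_k$ and coupling perturbations must not consume the whole $\mu\alpha_k$. This is why $\eta<\mu$ strictly and why $T$ must be chosen after $\eta$. I would check carefully that all the $t_{1,k}$-perturbations are $o(\alpha_k)$. If a term like $\alpha_k^2/\beta_k\cdot t_{1,k}$ is not $o(\alpha_k)$, then $\alpha_k/\beta_k\to0$ rescues it. If any coefficient contains $\bar\delta_k$ linearly rather than squared, I would absorb it into the contraction rather than into $C_1\alpha_k\bar\delta_k^2$.
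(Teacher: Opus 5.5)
Your proposal is correct and follows essentially the same route as the paper. You take the $[1,\omega_1,\omega_2]$-weighted sum of the Lemma~\ref{lemma4} recursions after substituting $\delta_k\le\bar\delta_k$ and $\|\Pi_r\kappa_k\|_A\le M_\kappa$. You use strong convexity (PL) to turn $-\frac{\alpha_k}{2}\|\nabla F(\tilde x_k)\|_2^2$ into $-\mu\alpha_k t_{1,k}$, and the condition on $\omega_2$ to absorb the $t_{3,k}$ cross term. Then $\bar\delta_k\to0$ and $\alpha_k/\beta_k\to0$ bound each coefficient by its weight times $1-\eta\alpha_k$, and the $B_k$ entries supply $C_1,C_2,C_3$. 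Your explicit intermediate bounds $\omega_1(1-\frac{\rho_A}{2}\beta_k)$ and $\omega_2(1-\frac{\rho_A}{2}\beta_k)$ simply spell out the paper's ``by enlarging $T$'' step.
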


\begin{proof}
By Lemma~\ref{y_nr} and \eqref{eq:delta_bound}, there exist a sufficiently large integer $T\geq 0$ and a constant $M_\kappa>0$ such that $\delta_k\leq \bar{\delta}_k$ and $\|\Pi_r\kappa_k\|_A\leq M_\kappa$ for all $k\geq T$ almost surely. Using these bounds together with the $\mu$-strong convexity of $F$, the recursions in \eqref{F_F^*}, \eqref{s2_5} and \eqref{theorem2_eq17} can be rewritten for all $k\geq T$ as follows:
\begin{subequations}\label{eq:short_rec}
\begin{align}
\label{eq:short_rec_E}
\mathbb E[t_{1,k+1}| \mathcal F_k]
&\le
\bigl(1-\alpha_k\mu+8L\alpha_k\bar\delta_k^2\bigr)t_{1,k} \nonumber\\
&\quad
+\frac{\alpha_kL^2+2\alpha_kL^2\bar\delta_k^2}{n}t_{2,k}
+b_k^{(1)},
\\
\label{eq:short_rec_X}
\mathbb E[t_{2,k+1}| \mathcal F_k]
&\le
\left(1-\beta_k\rho_A+c_1\frac{\alpha_k^2}{\beta_k}\right)t_{2,k}
+c_2\beta_k t_{3,k} \nonumber\\
&\quad
+c_3\frac{\alpha_k^2}{\beta_k}t_{1,k}
+b_k^{(2)},
\\[1mm]
\label{eq:short_rec_Z}
\mathbb E[t_{3,k+1}| \mathcal F_k]
&\le
(1-\beta_k\rho_A)t_{3,k}
+c_4\frac{\alpha_k^2}{\beta_k}t_{2,k}
+c_5\frac{\alpha_k^2}{\beta_k}t_{1,k}
+b_k^{(3)}.
\end{align}
\end{subequations}
where for brevity, let 
$c_1:=\frac{4L^2c_{A,2}^2M_\kappa^2}{\rho_A}$, 
$c_2:=\frac{2}{\rho_A}\|\Pi_rA\|_A^2$, $c_3:=\frac{16nLc_{A,2}^2M_\kappa^2}{\rho_A}$, 
$c_4:=\frac{2L^2c_{A,2}^2M_\kappa^2}{\rho_A}$, $c_5:=\frac{8nLc_{A,2}^2M_\kappa^2}{\rho_A}$, 
$b_k^{(1)}=
\frac{\alpha_k\bar\delta_k^2}{n}\|\nabla \mathbf f^*\|_2^2
+L\sigma\beta_k^2\|\mathbf r\|_2^2$, 
$b_k^{(2)}
=
\frac{8\alpha_k^2 c_{A,2}^2M_\kappa^2}{\beta_k\rho_A}\|\nabla \mathbf f^*\|_2^2
+2\beta_k^2 c_{A,2}^2\|\Pi_r\|_A^2 n\sigma$, and 
$b_k^{(3)}=
\frac{4\alpha_k^2 c_{A,2}^2M_\kappa^2}{\beta_k\rho_A}\|\nabla \mathbf f^*\|_2^2
+\beta_k^2 c_{A,2}^2\|\Pi_r\|_A^2 n\sigma$.

Multiplying \eqref{eq:short_rec_X} and \eqref{eq:short_rec_Z} by $\omega_1$ and $\omega_2$, respectively, and adding the resulting inequalities to \eqref{eq:short_rec_E}, we arrive at the following one-step drift inequality for the Lyapunov sequence:
\begin{equation}\label{phi_concise}
\mathbb E[\Phi_{k+1} | \mathcal F_k]
\le
\Theta_{1,k}t_{1,k}+\Theta_{2,k}t_{2,k}+\Theta_{3,k}t_{3,k}+\bar b_k,
\end{equation}
where $
\Theta_{1,k}
=
1-\alpha_k\mu+8L\alpha_k\bar\delta_k^2
+(\omega_1c_3+\omega_2c_5)\frac{\alpha_k^2}{\beta_k}$, 
$
\Theta_{2,k}
=
\frac{\alpha_kL^2+2\alpha_kL^2\bar\delta_k^2}{n}
+\omega_1\left(1-\beta_k\rho_A+c_1\frac{\alpha_k^2}{\beta_k}\right)
+\omega_2c_4\frac{\alpha_k^2}{\beta_k}
$, 
$
\Theta_{3,k}
=
\omega_1c_2\beta_k+\omega_2(1-\beta_k\rho_A)
$, 
and 
$
\bar b_k:=b_k^{(1)}+\omega_1b_k^{(2)}+\omega_2b_k^{(3)}
$.

Fix any $\eta\in(0,\mu)$. By enlarging $T$, one can ensure that
$\Theta_{1,k}\le 1-\eta\alpha_k$,
$\Theta_{2,k}\le \omega_1(1-\eta\alpha_k)$ and 
$\Theta_{3,k}\le \omega_2(1-\eta\alpha_k)$ for all $k\geq T$.
Substituting these bounds into \eqref{phi_concise} gives
\begin{equation}\label{E_phi}
  \mathbb E[\Phi_{k+1} | \mathcal F_k]
\le
(1-\eta\alpha_k)\Phi_k+\bar b_k.
\end{equation}
Since $\bar{b}_k$ can be bounded by $\bar{b}_k \leq C_1\alpha_k\bar{\delta}_k^2 + C_2\frac{\alpha_k^2}{\beta_k}+C_3\beta_k^2$ for some positive constants $C_1,C_2,C_3$, the desired recursion in \eqref{phir} follows. 
This completes the proof.
\end{proof}

By virtue of the above Lyapunov sequence, a deterministic expected recursion denoted by $\bar{\Phi}_k:=\mathbb E[\Phi_k]$ is obtained. Taking total expectation on both sides of the drift inequality in Lemma~\ref{thm:random_lyapunov_drift} and using the tower property yield
\begin{eqnarray}
\hspace{-15pt}\mathbb E[\Phi_{k+1}]
\hspace{-7pt}&=&\hspace{-7pt}
\mathbb E\!\left[\mathbb E[\Phi_{k+1}| \mathcal F_k]\right]\nonumber\\
\hspace{-7pt}&\le&\hspace{-7pt}
(1-\eta\alpha_k)\mathbb E[\Phi_k]
+
C_1\alpha_k\bar\delta_k^2
+
C_2\frac{\alpha_k^2}{\beta_k}
+
C_3\beta_k^2.
\end{eqnarray}
Then, for all $k\ge T$, we have
\begin{equation}\label{phi^'}
  \bar{\Phi}_{k+1}
\le
(1-\eta\alpha_k)\bar{\Phi}_k
+
C_1\alpha_k\bar\delta_k^2
+
C_2\frac{\alpha_k^2}{\beta_k}
+
C_3\beta_k^2.
\end{equation}

We are now in a position to derive an explicit convergence rate under the polynomial decaying gains in Remark~\ref{remark3}. To this end, we first recall the following auxiliary lemma.

\begin{lemma}
\label{lem:scalar_recursion_estimate}
Let $\{\psi_k\}$ be a nonnegative sequence such that, for all sufficiently large $k$, $
\psi_{k+1}\le
\left(1-\frac{\vartheta}{(k+1)^a}\right)\psi_k
+\frac{C}{(k+1)^s}$,
where $\vartheta>0$, $C>0$, $0<a<1$ and $s>a$. Then, we have
\begin{eqnarray}
\psi_k\hspace{-7pt}&=&\hspace{-7pt}
\mathcal{O}\!\left((k+1)^{-(s-a)}\right).
\end{eqnarray}
\end{lemma}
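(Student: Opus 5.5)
The plan is to compare $\psi_k$ with the candidate rate $u_k:=M(k+1)^{-(s-a)}$ and show by induction that $\psi_k\le u_k$ for all large $k$, once $M$ is chosen large enough. Let $T_0$ be the index from which the recursion holds and $1-\vartheta(k+1)^{-a}\ge0$ (this is eventually true since $a>0$). Set $\epsilon:=s-a>0$. The whole argument reduces to checking that $u_k$ is a supersolution of the recursion for $k$ large:
\[
\Bigl(1-\frac{\vartheta}{(k+1)^a}\Bigr)u_k+\frac{C}{(k+1)^s}\le u_{k+1}.
\]

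To verify this inequality, expand $u_{k+1}$ using $(1+\frac{1}{k+1})^{-\epsilon}\ge 1-\frac{\epsilon}{k+1}$, which gives
\[
u_{k+1}\ge u_k-\frac{M\epsilon}{(k+1)^{\epsilon+1}}.
\]
It therefore suffices to have
\[
\frac{M\vartheta}{(k+1)^{\epsilon+a}}\ge \frac{C}{(k+1)^s}+\frac{M\epsilon}{(k+1)^{\epsilon+1}}.
\]
Since $\epsilon+a=s$, this becomes
\[
M\vartheta\ge C+M\epsilon\,(k+1)^{a-1}.
\]
Because $a<1$, the last term tends to zero. I would therefore pick $T_1\ge T_0$ with $\epsilon(T_1+1)^{a-1}\le\vartheta/2$ and require $M\ge 2C/\vartheta$. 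This is the key step, and it is exactly where $a<1$ is used: the contraction $\vartheta(k+1)^{-a}$ dominates the relative change $\epsilon/(k+1)$ of the polynomial rate. Making this domination explicit is the main obstacle; the remaining steps are routine.

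To finish, also enlarge $M$ so that $M\ge \psi_{T_1}(T_1+1)^{\epsilon}$, which gives the base case $\psi_{T_1}\le u_{T_1}$. For the inductive step, if $\psi_k\le u_k$ for some $k\ge T_1$, then nonnegativity of the coefficient $1-\vartheta(k+1)^{-a}$ yields
\[
\psi_{k+1}\le\Bigl(1-\frac{\vartheta}{(k+1)^a}\Bigr)u_k+\frac{C}{(k+1)^s}\le u_{k+1}.
\]
Hence $\psi_k\le M(k+1)^{-(s-a)}$ for all $k\ge T_1$. The finitely many earlier terms are absorbed into the $\mathcal O$ constant, which proves $\psi_k=\mathcal O((k+1)^{-(s-a)})$.
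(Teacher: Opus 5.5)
Your proposal is correct and follows essentially the paper's argument. Both proofs run an induction against the comparison sequence $M(k+1)^{-(s-a)}$; your Bernoulli bound $(1+\frac{1}{k+1})^{-\epsilon}\ge 1-\frac{\epsilon}{k+1}$ plays the role of the paper's mean-value estimate $(k+1)^{-q}-(k+2)^{-q}\le q(k+1)^{-q-1}$, and $a<1$ is used the same way to make $M\vartheta-C$ dominate the $Mq(k+1)^{a-1}$ term.

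Your bookkeeping is cleaner than the paper's. You fix $T_1$ independently of $M$ and then enlarge $M$ so the base case holds at $T_1$, whereas the paper places its base case at $T_0$ but runs the induction only from $T_1\ge T_0$. You also state explicitly the nonnegativity of $1-\vartheta(k+1)^{-a}$ that the inductive step needs, which the paper leaves implicit.
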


\begin{proof}
See Appendix~\ref{app:proof-scalar}.
\end{proof}

\begin{theorem}[Sublinear expected convergence rate]
\label{cor:polynomial_rate}
Let assumptions in Lemma~\ref{thm:random_lyapunov_drift} hold, and the decaying gains be chosen according to Remark~\ref{remark3}. Then the optimality gap and the consensus/tracking errors converge in expectation at the rate $\mathcal{O}\!\left((k+1)^{-m}\right)$ where $m=\min\{2(1-p),\,a-b,\,2b-a\}$.
\end{theorem}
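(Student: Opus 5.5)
The plan is to start from the deterministic recursion \eqref{phi^'} for $\bar\Phi_k=\mathbb E[\Phi_k]$, which holds for all $k\ge T$. We substitute the polynomial gains of Remark~\ref{remark3} into each forcing term, bound the total forcing by a single power $C(k+1)^{-s}$, and then invoke Lemma~\ref{lem:scalar_recursion_estimate}. With $\alpha_k=c_\alpha(k+1)^{-a}$, the contraction factor is $1-\eta c_\alpha (k+1)^{-a}$, so $\vartheta=\eta c_\alpha>0$ and $0<a<1$ as the lemma requires.

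Next we estimate the three forcing terms. For the scaling-error term, $\Gamma_k=\sum_{t\le k}c_\gamma(t+1)^{-q}\ge c(k+1)^{1-q}$ since $q<1$. Also $\gamma_l/\lambda_l=(c_\gamma/c_\lambda)(l+1)^{p-q}$ is increasing because $p>q$, so $\sup_{0\le l\le k}\gamma_l/\lambda_l\le c(k+1)^{p-q}$. Hence $\bar\delta_k\le C(k+1)^{-(1-q)}+C(k+1)^{-(1-p)}\le C'(k+1)^{-(1-p)}$, which gives $C_1\alpha_k\bar\delta_k^2=\mathcal O((k+1)^{-a-2(1-p)})$. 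The coupling term is $C_2\alpha_k^2/\beta_k=\mathcal O((k+1)^{-(2a-b)})=\mathcal O((k+1)^{-a-(a-b)})$. The noise term is $C_3\beta_k^2=\mathcal O((k+1)^{-2b})=\mathcal O((k+1)^{-a-(2b-a)})$. Summing, the total forcing is bounded by $C(k+1)^{-(a+m)}$ with $m=\min\{2(1-p),a-b,2b-a\}$.

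It remains to check $s=a+m>a$, i.e.\ $m>0$. This holds because $p<1$, $b<a$, and $2b-a>0$ (from $b>\frac12$ and $a<1$). Lemma~\ref{lem:scalar_recursion_estimate} then gives $\bar\Phi_k=\mathcal O((k+1)^{-m})$ for all large $k$. For the finitely many $k<T$, each $\bar\Phi_k$ is finite: the iterates are finite-variance functions of initial data and noise, and $\kappa_{i,k}$ is bounded almost surely. Absorbing these indices into the constant extends the bound to all $k$.

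Finally, $\Phi_k$ is a positive combination of the nonnegative quantities $t_{1,k},t_{2,k},t_{3,k}$, with $\omega_1,\omega_2>0$. So $\mathbb E[F(\tilde x_k)-F(x^*)]$, $\mathbb E\|\mathbf x_k-\tilde{\mathbf x}_k\|_A^2$ and $\mathbb E\|\mathbf z_k-\tilde{\mathbf z}_k\|_A^2$ are each at most $\bar\Phi_k/\min\{1,\omega_1,\omega_2\}=\mathcal O((k+1)^{-m})$. By Lemma~\ref{lemma1} the same rate holds in the Euclidean norm. The main delicate point is the first one: $T$ and the bounds $\delta_k\le\bar\delta_k$, $\|\Pi_r\kappa_k\|_A\le M_\kappa$ are obtained almost surely, and $T$ could a priori be random. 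I would handle this by noting that in \eqref{eq:delta_bound} the constants come from the uniform noise bound $\tilde\sigma_y$ and from deterministic gain sequences, so $T$ can be chosen deterministically. Taking expectations in \eqref{phi^'} is then legitimate.
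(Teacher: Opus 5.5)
Your proposal is correct and follows the paper's own argument: you substitute the polynomial gains into the expected recursion for $\bar\Phi_k$, bound the total forcing by $C(k+1)^{-(a+m)}$, invoke Lemma~\ref{lem:scalar_recursion_estimate}, and read off each $t_{j,k}$ from $\Phi_k$. Your extra details make explicit points the paper leaves implicit: the lower bound $\Gamma_k\ge c(k+1)^{1-q}$ (the direction actually needed, where the paper writes $\mathcal{O}$), the observation that $T$ can be taken deterministic thanks to the uniform bound $\tilde\sigma_y$, and the finiteness of $\bar\Phi_k$ for $k<T$.
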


\begin{proof}
Under the polynomial decaying gains in Remark~\ref{remark3}, one has $\Gamma_k=\mathcal{O}((k+1)^{1-q})$ and $ \sup_{0\leq \ell\leq k}\frac{\gamma_\ell}{\lambda_\ell} =\mathcal{O}((k+1)^{p-q})$, and hence, by \eqref{delta_bar_define}, we have $\bar{\delta}_k = \mathcal{O}((k+1)^{-(1-q)})+\mathcal{O}((k+1)^{-(1-p)}) = \mathcal{O}((k+1)^{-(1-p)})$ where the last equality follows from $q<p$. 

It follows that 
$
\alpha_k\bar\delta_k^2=
\mathcal{O}\!\left((k+1)^{-(a+2-2p)}\right)$, 
$
\frac{\alpha_k^2}{\beta_k}=
\mathcal{O}\!\left((k+1)^{-(2a-b)}\right)$ 
and 
$
\beta_k^2=
\mathcal{O}\!\left((k+1)^{-2b}\right)$. 
In light of these relations and \eqref{phi^'}, we have that there exists a constant \(C>0\) such that
\begin{eqnarray}
\bar{\Phi}_{k+1}\hspace{-7pt}&\le&\hspace{-7pt}
\left(1-\frac{\eta c_\alpha}{(k+1)^a}\right)\bar{\Phi}_k
+\frac{C}{(k+1)^s},
\end{eqnarray}
for all sufficiently large \(k\), where 
$s=
\min\{a+2-2p,2a-b,2b\}$. 
Since 
$
s-a=
\min\{2(1-p),\,a-b,\,2b-a\}$ 
and the exponent conditions in Remark~\ref{remark3} imply \(s>a\), we have the following relation by virtue of Lemma~\ref{lem:scalar_recursion_estimate}: 
\begin{eqnarray}
\bar{\Phi}_k\hspace{-7pt}&=&\hspace{-7pt}
\mathcal{O}\!\left((k+1)^{-(s-a)}\right).
\end{eqnarray}

Recalling the definition of \(\Phi_k\), i.e., $\Phi_k=
t_{1,k}+\omega_1 t_{2,k}+\omega_2 t_{3,k}$, one has that
\begin{eqnarray}
t_{1,k}\hspace{-7pt}&\le&\hspace{-7pt}
\Phi_k,\quad
t_{2,k}\le \frac{\Phi_k}{\omega_1},\quad
t_{3,k}\le \frac{\Phi_k}{\omega_2}.
\end{eqnarray}
Taking expectations over each element gives rise to
\begin{eqnarray}
\mathbb E[t_{1,k}]\hspace{-7pt}&\le&\hspace{-7pt}
\bar{\Phi}_k,\quad
\mathbb E[t_{2,k}]\le \frac{\bar{\Phi}_k}{\omega_1},\quad
\mathbb E[t_{3,k}]\le \frac{\bar{\Phi}_k}{\omega_2}.
\end{eqnarray}

Thus, the proof is completed.
\end{proof}

Based on Theorem~\ref{cor:polynomial_rate}, we next characterize the
supremum of the achievable rate exponent $m$ under the specific polynomial
gains in Remark~\ref{remark3}.
\begin{corollary}[Near-$O(k^{-\frac{1}{3}})$ expected convergence rate]
Under the conditions of Theorem~\ref{cor:polynomial_rate}, for any sufficiently small
$\epsilon>0$, the decaying gains can be chosen such that
\[
\mathbb E[t_{j,k}]
=
O\!\left((k+1)^{-\frac13+\epsilon}\right),
\qquad j=1,2,3.
\]
\end{corollary}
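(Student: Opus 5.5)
The corollary will follow from Theorem~\ref{cor:polynomial_rate} once I show that the exponent $m=\min\{2(1-p),\,a-b,\,2b-a\}$ can be brought within any $\epsilon$ of $\frac13$ while keeping every constraint of Remark~\ref{remark3}: $0<q<p<1$, $\frac12<b<a<1$, $a>2p-1$, and $2a-b>1$. So this is a finite-dimensional optimization over the exponents. The plan has three steps: find the supremum of $m$, check that the constraints allow one to approach it, and then invoke Theorem~\ref{cor:polynomial_rate}.

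\textbf{Step 1: an upper bound on $m$.} The terms $a-b$ and $2b-a$ do not involve $p$. Adding them gives
\[
(a-b)+(2b-a)=b.
\]
Hence $\min\{a-b,2b-a\}\le b/2<\frac12$. This alone does not give $\frac13$, so the constraint $a<1$ must be used. If $m\ge\frac13$, then $a-b\ge\frac13$ and $2b-a\ge\frac13$. The second gives $b\ge \frac{a}{2}+\frac16$, and substituting into the first gives $a\ge b+\frac13\ge \frac{a}{2}+\frac12$, so $a\ge1$. This contradicts $a<1$. Therefore $m<\frac13$ for every admissible choice, and $\frac13$ is at most the supremum.

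\textbf{Step 2: approaching $\frac13$.} The boundary point of Step 1 is $a=1$, $b=\frac23$. For small $\epsilon'>0$, I take
\[
a=1-\epsilon',\qquad b=\tfrac23-\tfrac{\epsilon'}{2}.
\]
This gives $a-b=\frac13-\frac{\epsilon'}{2}$ and $2b-a=\frac13$. The $\alpha,\beta$ constraints then check out:
\begin{itemize}
\item $b>\frac12$ and $b<a<1$ hold for small $\epsilon'$;
\item $2a-b=\frac43-\frac32\epsilon'>1$.
\end{itemize}
Next I choose $p\in(0,\frac12]$, say $p=\frac13$, and any $q\in(0,p)$. Then $2(1-p)=\frac43$, which exceeds $\frac13$, and $2p-1\le 0<a$, so that constraint holds automatically. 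With this choice $m=\frac13-\frac{\epsilon'}{2}$. Taking $\epsilon'=2\epsilon$ gives $m=\frac13-\epsilon$.

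\textbf{Step 3: conclude.} Theorem~\ref{cor:polynomial_rate} gives $\mathbb E[t_{j,k}]=\mathcal O((k+1)^{-m})=\mathcal O((k+1)^{-\frac13+\epsilon})$ for $j=1,2,3$. Here $c_\lambda,c_\beta\in(0,1]$ and $c_\gamma,c_\alpha>0$ are arbitrary, as Remark~\ref{remark3} allows.

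The main point to get right is the upper bound in Step 1. The naive combination of the two terms only yields $\frac12$; the sharp value $\frac13$ appears only when the constraint $a<1$ is used. The remaining steps are routine checks that the chosen exponents satisfy all the constraints strictly.
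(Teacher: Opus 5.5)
Your proposal is correct and follows essentially the paper's route: bound $\min\{a-b,2b-a\}$ using $a<1$ to get $m<\frac13$, then exhibit admissible exponents and invoke Theorem~\ref{cor:polynomial_rate}. The paper uses the balanced choice $a=1-3\epsilon$, $b=\frac23-2\epsilon$, $p=\frac34$, while your $a=1-2\epsilon$, $b=\frac23-\epsilon$, $p=\frac13$ makes $a>2p-1$ automatic. One minor wording slip: Step~1 shows the supremum is at most $\frac13$, not the reverse, and the stated bound itself only needs the achievability argument of Step~2.
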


\begin{proof}
Since $m\le\min\{a-b,2b-a\}$, the largest value of the
right-hand side for a fixed $a$ is obtained by balancing the two
terms, i.e., $a-b=2b-a$, which gives $b=\frac{2a}{3}$ and
$m\le\frac{a}{3}$. Recalling that the rate analysis in Lemma~\ref{lem:scalar_recursion_estimate}
requires $a<1$, it follows that $m<\frac{1}{3}$.
On the other hand, this upper bound can be approached arbitrarily
closely. For example, for any $\epsilon\in(0,\frac{1}{12})$,
choosing $a=1-3\epsilon$, $b=\frac{2}{3}-2\epsilon$,
$p=\frac{3}{4}$ together with any
$q\in(0,\frac{3}{4})$ satisfies the conditions in
Remark~\ref{remark3} and yields $m=\frac{1}{3}-\epsilon$. Therefore, 
$\sup m=\frac{1}{3}$.
\end{proof}

\begin{remark}[Unbounded eigenvector-estimation noise]
Assumption~\ref{ass:noise} imposes a uniform boundedness condition on
$\mathbf e^y_{i,k}$, i.e., $\|\mathbf{e}_{i,k}^y\|\leq\tilde{\sigma}_y$ $\forall i\in\mathcal{V}$ and $k\geq 0$. This condition can be relaxed to
unbounded noise in the almost sure convergence analysis. For example,
suppose that $\mathbf e^y_{i,k}$ has uniformly sub-Gaussian tails.
By the Borel--Cantelli lemma,
$\max_{i\in\mathcal V}\|\mathbf e^y_{i,k}\|
=O(\sqrt{\log(k+2)})$ almost surely.
Accordingly, the argument in Lemma~\ref{y_nr} can be retained by strengthening
$\frac{\Lambda_k}{\Gamma_k}\to0$ to
$\frac{\Lambda_k\sqrt{\log(k+2)}}{\Gamma_k}\to0$,
while keeping
$\frac{1}{\Gamma_k}\sup_{0\le\ell\le k}
\frac{\gamma_\ell}{\lambda_\ell}\to0$.
For Theorem~\ref{almost_sure_conv}, the additional noise term can be handled by further
requiring
$\sum_{k=0}^{\infty}
\frac{\alpha_k\log(k+2)}{\Gamma_k^2}<\infty$.
For the polynomial gains in Remark~\ref{remark3}, these strengthened conditions
are also satisfied by the stated exponent conditions.
However, extending the convergence rate result in Theorem~\ref{cor:polynomial_rate} requires
additional analysis. As seen from the conditional recursions prior to the deterministic bounds used in Lemma~\ref{thm:random_lyapunov_drift}, terms involving products such as
$\delta_k^2 t_{j,k}$ and
$\|\Pi_r\kappa_k\|_A^2 t_{j,k}$ arise.
Under sub-Gaussian noise, these bounds are no longer deterministic
and depend on the realized noise sequence. Consequently, such products
require additional probabilistic bounds, such as higher-order moment
or high-probability tail bounds, before the expectation recursion can
be closed.
\end{remark}

\section{Numerical Experiments}\label{sec:numerical}
We evaluate the proposed algorithm on a distributed logistic-regression problem over a ring graph with $10$ agents. The local objective function of agent $i$ is chosen as
\[
f_i(x)=\frac{a}{2n}\|x\|_2^2+\sum_{j=1}^{m_i}\ln\!\Big(1+\exp\!\big(-c_{ij} b_{ij}^{\top}x\big)\Big),
\]
where $n$ is the number of agents, $m_i$ is the number of local training samples at agent $i$, $b_{ij}$ is the feature vector of the $j$th sample, and $c_{ij}$ is its associated label.

In all experiments, the information-sharing noise terms in the updates of $\mathbf{x}_k$ and $\mathbf{z}_k$ are generated as i.i.d. Gaussian random variables $\mathbf{e}_k^x,\mathbf{e}_k^z\sim \mathcal{N}(0,5I)$. 
For the update of $\mathbf{y}_{i,k}$, we use clipped Gaussian noise so that $\|e_{i,k}^y\|\le \tilde\sigma_y=3$. The decaying gains in R-Xi-row algorithm are chosen as 
$
\alpha_k=\frac{0.3}{1+0.2k^{0.85}}, 
\beta_k=\frac{1}{1+0.2k^{0.6}}, 
\lambda_k=\frac{0.6}{1+0.2k^{0.85}}, 
\gamma_k=\frac{1.2}{1+0.2k^{0.8}}$. 
As baselines, we consider Algorithm in \cite{TAC_GradientTrackingBasedDistributedOptimizationWithGuaranteedOptimalitUnderNoisyInformationSharing}, Xi-row~\cite{Linear_convergence_in_optimization_over_directed_graphs_with_row-stochastic_matrices}, and FRSD~\cite{TAC_2024_Fast_row_stochastic}. Algorithm in \cite{TAC_GradientTrackingBasedDistributedOptimizationWithGuaranteedOptimalitUnderNoisyInformationSharing} uses the same decaying step sizes $\alpha_k$ and $\beta_k$ as R-Xi-row. Xi-row and FRSD use the constant step size $\alpha=0.01$ in their original linear-convergence designs, while FRSD additionally uses $\beta=5$. To examine their robustness under noisy information sharing, we also equip Xi-row and FRSD with a diminishing-mixing factor 
$\beta_k=\frac{1}{1+0.2k^{0.6}}$. 
All reported results are averaged over $500$ Monte Carlo runs.

We first consider the scenario in which information-sharing noise terms $\mathbf{e}_k^x$ and $\mathbf{e}_k^z$ appear, while the left eigenvector estimate dynamics remains noise-free. In this case, we refer to the resulting variant of R-Xi-row as \emph{Simplified R-Xi-row}. For each agent $i\in\mathcal V$, \eqref{re_3b} becomes $
z_{i,k+1}=(1-\beta_k)z_{i,k}+\beta_k\hat z_{i,k}
+\alpha_k\frac{\nabla f_i(x_{i,k})}{n[\mathbf{y}_{i,k}]_i}$.
Fig~\ref{fig1} reports the optimality gap in terms of $F(\tilde x_k)-F(x^\star)$. The results show that Simplified R-Xi-row and Algorithm in \cite{TAC_GradientTrackingBasedDistributedOptimizationWithGuaranteedOptimalitUnderNoisyInformationSharing} are substantially more robust to information-sharing noise than the other baselines. For Xi-row and FRSD augmented with naive diminishing mixing, the decaying gain mitigates the noise effect to some extent, but a non-vanishing bias remains. This is because the information-sharing noise is still accumulated in the gradient-tracking dynamics, although its effect is asymptotically suppressed.
 
\begin{figure}[t]
   \centering
   \includegraphics[scale=0.52]{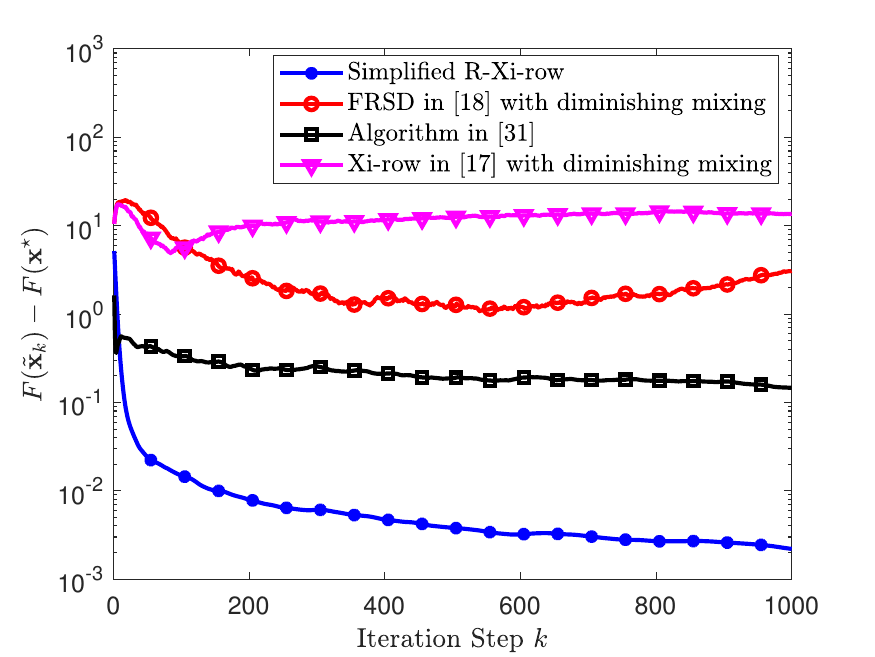}
   \caption{Comparison of the optimality gap $F(\tilde x_k)-F(x^\star)$ under the information-sharing noise $\mathbf{e}_k^x$ and $\mathbf{e}_k^z$.}\label{fig1}
 \end{figure}
\begin{figure}[t]
   \centering
   \includegraphics[scale=0.52]{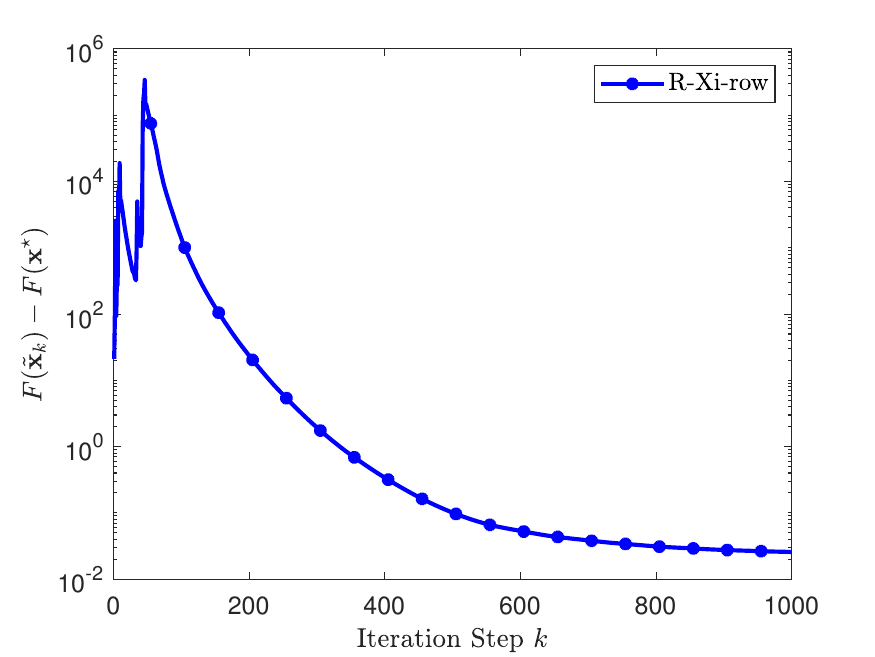}
   \caption{The optimality gap $F(\tilde x_k)-F(x^\star)$ under information-sharing noise $\mathbf{e}_k^x$, $\mathbf{e}_k^y$ and $\mathbf{e}_k^z$.}\label{fig2}
 \end{figure} 
 
In the second scenario, we evaluate R-Xi-row when information-sharing noise affects all the algorithm dynamics, including the left eigenvector estimate. As shown in Fig~\ref{fig2}, the proposed R-Xi-row still effectively attenuates the influence of information-sharing noise in the left eigenvector estimate, although noticeable fluctuations appear in the initial stage. The curves shown in Fig~\ref{fig1} are omitted from Fig~\ref{fig2} because the competing methods diverge at much larger orders of magnitude, which would severely distort the scale of the figure.

\begin{figure}[t]
   \centering
   \includegraphics[scale=0.52]{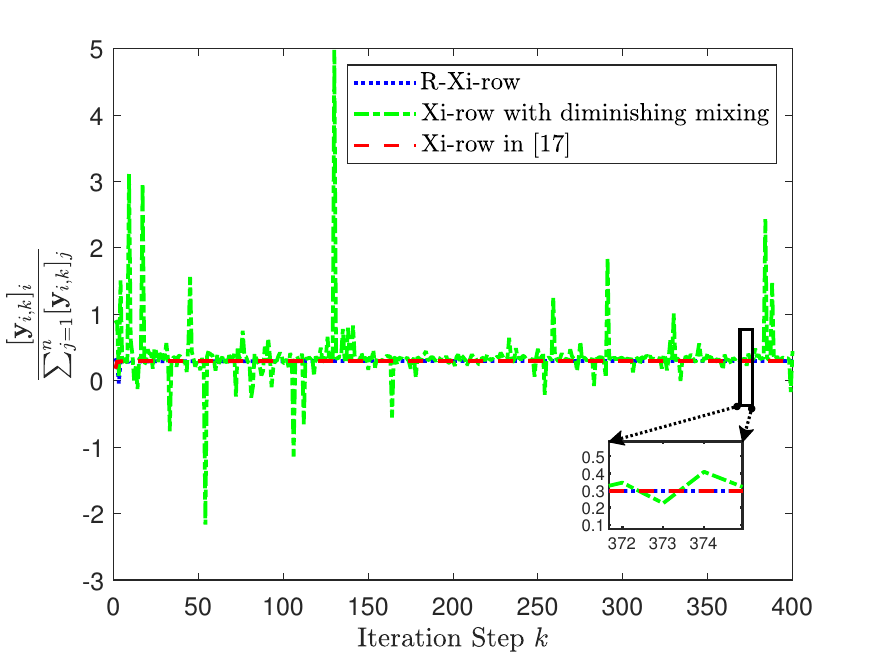}
   \caption{Evolutions of the left eigenvector estimate for agent $1$ under information-sharing noise $\mathbf{e}_k^y$. Xi-row without injected $\mathbf{e}_k^y$ is included as a noiseless reference.}\label{fig3}
 \end{figure}

To further illustrate the effect of information-sharing noise on the left eigenvector estimate, Fig~\ref{fig3} compares the evolution of the quantity $
\frac{[\mathbf{y}_{i,k+1}]_i}{\sum_{j=1}^n [\mathbf{y}_{i,k+1}]_j}$ 
for agent $i=1$. For reference, Xi-row without injected $\mathbf{e}_k^y$ is also included as a noiseless benchmark. The green curve shows that the left eigenvector estimate of Xi-row with diminishing mixing remains persistently affected by the information-sharing noise in the left eigenvector estimate. By contrast, the blue curve demonstrates that the proposed R-Xi-row tracks the noiseless eigenvector estimate closely, and hence asymptotically tracks the left eigenvector of the weight matrix $W$. Moreover, the plotted quantity remains bounded and does not exhibit growth as the suppressed noise term $\lambda_k \mathbf{e}_k^y$ and the correction term $\gamma_kY_0$ accumulate. This observation also explains the scaling gain used in the first branch of \eqref{eq:safeguarded_kappa}, which avoids the vanishing effective step size that would result from directly using $\frac{1}{n[\mathbf{y}_{i,k+1}]_i}$.

\section{Conclusion}\label{sec:conclusion}
In this paper, we investigated distributed optimization over directed graphs with imperfect information sharing. Rather than directly tracking the global gradient, we redesigned the gradient-tracking architecture so that it can be implemented using only row-stochastic weights while reducing the direct accumulation of information-sharing noise in the tracking recursion. Based on this architecture, we developed the R-Xi-row algorithm, which exhibits enhanced robustness to information-sharing noise, including the case where the eigenvector-estimation dynamics are noisy. Numerical experiments demonstrated the advantage of the proposed architecture under noisy optimization and gradient-tracking dynamics, where the conventional row-stochastic methods with diminishing mixing retain noticeable optimization biases. Moreover, when information-sharing noise also affects the left-eigenvector estimation dynamics, R-Xi-row still effectively reduces the optimality gap and recovers the required eigenvector-based gradient scaling. Under strongly convex and $L$-smooth objective functions, we established almost sure convergence of the proposed algorithm under suitable diminishing gains. Moreover, under polynomially decaying gains, we further derived an expected convergence rate result via a Lyapunov recursion in expectation.

\appendices


\section{Proof of \texorpdfstring{Lemma~\ref{y_nr}}{the scaling-gain lemma}}\label[appendix]{app:proof-y-nr}
\begin{proof}
Let's denote the diminishing mixing matrix in \eqref{re_3a} as 
$\overline{W}_k:=(1-\lambda_k)I+\lambda_k W$. 
$Y_{k+1}$ is written by
\begin{eqnarray}\label{recursive_Y}
  \hspace{-14pt}Y_{k+1}\hspace{-7pt}&=&\hspace{-7pt}\overline{W}(k:0)Y_0+\sum_{l=0}^{k}\overline{W}(k:l+1)(\lambda_l\mathbf{e}_{l}^y+\gamma_lY_0),
\end{eqnarray}
where $\overline{W}(k:l):=\overline{W}_k\times\overline{W}_{k-1}\times\cdots\times\overline{W}_{l}$ with $\overline{W}(k:k+1):=I$. Denote $\hat{Y}_k=(I-\mathbf{1}\mathbf{r}^{\top})Y_k$. 
Then, left multiplying both sides of \eqref{recursive_Y} by $I-\mathbf{1}\mathbf{r}^{\top}$ results in
\begin{small}
\begin{eqnarray*}
 \hspace{-3pt} \hat{Y}_{k+1}\hspace{-8pt}&=&\hspace{-8pt}(\overline{W}_k-\mathbf{1}\mathbf{r}^{\top})\hat{Y}_k+\lambda_k(I-\mathbf{1}\mathbf{r}^{\top})\mathbf{e}_k^y+\gamma_k(I-\mathbf{1}\mathbf{r}^{\top})Y_0\nonumber\\
         \hspace{-8pt}&=&\hspace{-8pt}(\overline{W}(k:0)\hspace{-1pt}-\hspace{-1pt}\mathbf{1}\mathbf{r}^{\top})\hat{Y}_0 \hspace{-1pt}+\hspace{-1pt} \sum_{l=0}^{k} (\overline{W}(k\hspace{-1pt}:\hspace{-1pt}l\hspace{-1pt}+\hspace{-1pt}1)\hspace{-1pt}-\hspace{-1pt}\mathbf{1}\mathbf{r}^{\top})\,(\lambda_l \mathbf{e}_l^{y} \hspace{-1pt}+\hspace{-1pt} \gamma_l Y_0).
\end{eqnarray*}
\end{small}

For a matrix
$U=[\mathbf u_1^\top,\ldots,\mathbf u_n^\top]^\top
\in\mathbb R^{n\times d}$, define
$
\|U\|_{\mathbf r}^2
:=\sum_{i=1}^n r_i\|\mathbf u_i\|_2^2$.
By Lemma~1 in \cite{H.Reisizadeh_2023TAC_Distributedoptimizationovertimevaryinggraphswithimperfectsharingofinformation} and $\sqrt{1-x}\leq 1-x/2$, there exist
fixed constants
$\kappa>0$ and $\lambda>0$ such that
\begin{eqnarray}\label{new_1}
  \frac{\|\hat{Y}_{k+1}\|_{\mathbf{r}}}{\Gamma_k} \hspace{-7pt}&\leq&\hspace{-7pt} \frac{\kappa}{\Gamma_k} {\prod_{t=0}^{k} (1-\lambda \lambda_t)} \|\hat{Y}_0\|_{\mathbf{r}}\nonumber\\
  \hspace{-10pt}\hspace{-7pt}&&\hspace{-7pt}
    + \frac{\kappa}{\Gamma_k} \sum_{l=0}^{k} {\prod_{t=l+1}^{k} (1-\lambda \lambda_t)}
    \bigl(\lambda_l \|\mathbf{e}_l^{y}\|_{\mathbf{r}} + \gamma_l \|Y_0\|_{\mathbf{r}}\bigr)\nonumber\\
 \hspace{-7pt}&\overset{(i)}{\leq}&\hspace{-7pt} \frac{\kappa}{\Gamma_k} {\prod_{t=0}^{k} (1-\lambda \lambda_t)} \|\hat{Y}_0\|_{\mathbf{r}}+\frac{\kappa\sqrt{n}\tilde{\sigma}_y}{\Gamma_k\lambda}+\frac{\kappa\sup_{l\leq k}\frac{\gamma_l}{\lambda_l}}{\Gamma_k\lambda},\nonumber\\
 &&
\end{eqnarray}
where $(i)$ follows from Assumption~\ref{ass:noise} and Lemma 4 in \cite{H.Reisizadeh_2023TAC_Distributedoptimizationovertimevaryinggraphswithimperfectsharingofinformation}. Noting that $\lim_{k\rightarrow \infty}\frac{\sup_{0\leq l\leq k}\frac{\gamma_l}{\lambda_l}}{\Gamma_k}= 0$ and $\lim_{k\rightarrow \infty}\Gamma_k=\infty$, taking the limit on both sides of \eqref{new_1} yields, almost surely,
\begin{equation}\label{24_1}
  \lim_{k\rightarrow\infty} \frac{\|\hat{Y}_{k+1}\|_{\mathbf{r}}}{\Gamma_k}=0.
\end{equation}

Meanwhile, from \eqref{recursive_Y}, we have $\mathbf{r}^{\top}Y_{k+1}=\mathbf{r}^{\top}Y_0+\sum_{l=0}^{k}\lambda_l\mathbf{r}^{\top}\mathbf{e}_l^y+\sum_{l=0}^{k}\gamma_l\mathbf{r}^{\top}Y_0$. Recalling that $Y_0=I$, it yields
\begin{eqnarray}\label{25_1}
  \hspace{-10pt} \lim_{k\rightarrow \infty}\frac{\mathbf{r}^{\top}Y_{k}}{\Gamma_k}\hspace{-7pt}&=&\hspace{-7pt}\lim_{k\rightarrow \infty}\frac{\mathbf{r}^{\top}}{\Gamma_k} + \frac{\sum_{l=0}^{k}\lambda_l\mathbf{r}^{\top}\mathbf{e}_l^y}{\Gamma_k} + \frac{\Gamma_k\mathbf{r}^{\top}}{\Gamma_k}=
   \mathbf{r}^{\top}
\end{eqnarray}
almost surely because $\lim_{k\rightarrow \infty}\Gamma_k=\infty$, $\lim_{k\rightarrow \infty}\frac{\Lambda_k}{\Gamma_k}=0$ and $\|\mathbf{e}_{k}^y\|$ is bounded due to Assumption~\ref{ass:noise}. Accordingly, by \eqref{24_1}--\eqref{25_1} and the relation $\hat{Y}_k=(I-\mathbf{1}\mathbf{r}^{\top})Y_k$, it has
\begin{eqnarray}\label{26_1}
  \lim_{k\rightarrow \infty}\frac{Y_k}{\Gamma_k}\hspace{-7pt}&=&\hspace{-7pt}\lim_{k\rightarrow \infty}\frac{\mathbf{1}\mathbf{r}^{\top}Y_k}{\Gamma_k}+\frac{\hat{Y}_k}{\Gamma_k}=\mathbf{1}\mathbf{r}^{\top} \quad \text{a.s.},
\end{eqnarray}
or equivalently, we have 
\begin{equation}\label{26_2}
  \lim_{k\rightarrow \infty}\frac{\mathbf{y}_{i,k}}{\Gamma_k}=\mathbf{r}, \quad
   \forall i\in\mathcal{V}, \quad \text{a.s.}
\end{equation}
Since $\Gamma_k\rightarrow\infty$ and
$\mathbf y_{i,k}/\Gamma_k\rightarrow\mathbf r$ a.s.
with $r_i>0$, we have
$[\mathbf y_{i,k+1}]_i\rightarrow+\infty$ a.s.,
$\forall i\in\mathcal V$.
Therefore, almost surely, there exists a finite 
$T_\kappa\geq0$ such that
$[\mathbf y_{i,k+1}]_i>\varepsilon_\kappa$,
$\forall i\in\mathcal V$ and $k\geq T_\kappa$.
Hence, the safeguard in \eqref{eq:safeguarded_kappa} is inactive
for all sufficiently large $k$, and
$\kappa_{i,k}
=
\frac{\sum_{j=1}^{n}[\mathbf y_{i,k+1}]_j}
{n[\mathbf y_{i,k+1}]_i}$, $\forall k\geq T_{\kappa}$.
Using \eqref{26_2} and $\mathbf 1^\top\mathbf r=1$, it follows that
$\lim_{k\rightarrow\infty}\kappa_{i,k}
=
\frac{\mathbf1^\top\mathbf r}{nr_i}
=
\frac{1}{nr_i}$,
$\forall i\in\mathcal V$ a.s.
This completes the proof.
\end{proof}

\section{Proof of \texorpdfstring{Lemma~\ref{lemma4}}{the conditional-recursion lemma}}\label[appendix]{app:proof-lemma6}
\begin{proof}
The proof is divided into three steps. Let us define $\nabla \tilde{f}_k=\sum_{i=1}^{n}r_i\kappa_{i,k}\nabla f_i(x_{i,k}), \quad \nabla \bar{f}_k=\frac{1}{n}\sum_{i=1}^{n}\nabla f_i(x_{i,k})$.

\textbf{Step 1:} Recursion of $\mathbb{E}[F(\tilde{x}_{k+1})-F(x^*)|\mathcal{F}_k]$.

In view of \eqref{re_3c} and \eqref{re_3b}, $\tilde{x}_{k+1}:=\mathbf{r}^{\top}\mathbf{x}_{k+1}$ reads as 
\begin{eqnarray}\label{17} 
\tilde{x}_{k+1}\hspace{-7pt}&=&\hspace{-7pt} \tilde{x}_k-\alpha_k\nabla \tilde{f}_k +\beta_k \mathbf{r}^{\top} \mathbf{e}_k^{x} -\beta_k\mathbf{r}^{\top}\mathbf{e}_k^{z}. 
\end{eqnarray} 
By \eqref{17} and the $L$-smoothness of $F(\cdot)$, we have 
\begin{eqnarray}\label{F} 
F(\tilde{x}_{k+1}) \hspace{-7pt}&\leq&\hspace{-7pt} F(\tilde{x}_k)-\left\langle \nabla F(\tilde{x}_k), \alpha_k\nabla \tilde{f}_k-\beta_k\mathbf{r}^{\top}\mathbf{e}_k^x+\beta_k\mathbf{r}^{\top}\mathbf{e}_k^z \right\rangle\nonumber\\ 
\hspace{-7pt}& &\hspace{-7pt} +\frac{L}{2}\left\|-\alpha_k\nabla \tilde{f}_k +\beta_k \mathbf{r}^{\top} (\mathbf{e}_k^{x} -\mathbf{e}_k^{z})\right\|_2^2. \end{eqnarray} 
Taking conditional expectation over \eqref{F}, and using Assumption~ \ref{ass:noise} yields 
\begin{eqnarray}\label{E_F} 
\mathbb{E}[F(\tilde{x}_{k+1})|\mathcal{F}_k]\hspace{-7pt}&\leq&\hspace{-7pt} F(\tilde{x}_k)- \alpha_k\langle \nabla F(\tilde{x}_k), \nabla \tilde{f}_k \rangle \nonumber\\ 
\hspace{-7pt}&&\hspace{-7pt}\qquad+\frac{L}{2}\alpha_k^2\|\nabla \tilde{f}_k\|_2^2 +L\sigma\|\mathbf{r}\|_2^2\beta_k^2. 
\end{eqnarray}

We then use the basic inequality $2\langle \mathbf{x},\mathbf{y} \rangle=\|\mathbf{x}\|^2+\|\mathbf{y}\|^2-\|\mathbf{x}-\mathbf{y}\|^2$ $\forall \mathbf{x}, \mathbf{y}\in\mathbb{R}^d$, and rearrange $- \alpha_k\langle \nabla F(\tilde{x}_k), \nabla \tilde{f}_k \rangle$ as 
\begin{align}\label{F_g}
  &-\alpha_k\langle \nabla F(\tilde{x}_k),\nabla \tilde{f}_k \rangle \nonumber\\ 
   \leq&-\frac{\alpha_k}{2}\|\nabla
  F(\tilde{x}_k)\|_2^2 -\frac{\alpha_k}{2}\|\nabla \tilde{f}_k\|_2^2 
   + \alpha_k\|\nabla \bar{f}_k-\nabla F(\tilde{x}_k)\|_2^2 \nonumber\\
   &+ \alpha_k\|\nabla \tilde{f}_k-\nabla \bar{f}_k\|_2^2,
\end{align}
By Lipschitz continuous gradient and Cauchy-Schwarz inequality, $\alpha_k\|\nabla \bar{f}_k-\nabla F(\tilde{x}_k)\|_2^2$ is further expanded by
\begin{align}\label{g-F}
  \hspace{-10pt}\alpha_k\|\nabla \bar{f}_k-\nabla F(\tilde{x}_k)\|_2^2 & \leq \frac{\alpha_kL^2}{n}\sum_{i=1}^{n}\|x_{i,k}-\tilde{x}_k\|_2^2\nonumber\\
   &\leq \frac{\alpha_kL^2}{n}\|\mathbf{x}_k-\tilde{\mathbf{x}}_k\|_{{A}}^2,
\end{align}
where the second inequality follows from Lemma~\ref{lemma1}.

Recalling that $\delta_k:=\max_{i\in\mathcal{V}}|nr_i\kappa_{i,k}-1|$. Applying the Cauchy–Schwarz inequality again to the last term on the right-hand side of \eqref{F_g} yields
\begin{equation}\label{delta_tildef-barf}
  \hspace{-5pt}\|\nabla \tilde{f}_k\hspace{-1pt}-\hspace{-1pt}\nabla \bar{f}_k\|_2^2 = \frac{1}{n^2}\|(\mathbf{1}^{\top}\hspace{-1pt}-\hspace{-1pt}n\mathbf{r}^{\top}\kappa_k)\nabla \mathbf{f}_k\|_2^2\leq \frac{\delta_k^2}{n}\|\nabla \mathbf{f}_k\|_2^2.
\end{equation}

From Th. 2.1.5 in \cite{introduct_nestrov}, it has
$
  \sum_{i=1}^{n}\|\nabla f_i(x^*)-\nabla f_i(\tilde{x}_k)\|_2^2\leq 2nL(F(\tilde{x}_k)-F(x^*))
$,
which further yields
\begin{align}\label{40}
  \hspace{-2pt}\sum_{i=1}^{n}\hspace{-1pt} \|\nabla f_i(\tilde{x}_k)\|_2^2 
   &\hspace{-1pt}\leq\hspace{-1pt} 2\hspace{-1pt}\sum_{i=1}^{n} (\|\nabla f_i(x^*)\hspace{-1pt}-\hspace{-1pt}\nabla f_i(\tilde{x}_k)\|_2^2)\hspace{-1pt}+\hspace{-1pt}\|\nabla f_i(x^*)\|_2^2\nonumber\\
   &\leq 4nL(F(\tilde{x}_k)-F(x^*))+2\sum_{i=1}^{n}\|\nabla f_i(x^*)\|_2^2. 
\end{align}
Therewith, $\|\nabla \mathbf{f}_k\|_2^2$ in \eqref{delta_tildef-barf} is bounded by
\begin{align}\label{f_2}
  \|\nabla \mathbf{f}_k\|_2^2
  &\leq 2 \sum_{i=1}^{n} (\|\nabla f_i(x_{i,k})-\nabla f_i(\tilde{x}_k)\|_2^2+\|\nabla f_i(\tilde{x}_k)\|_2^2)\nonumber\\
  &\leq 2 L^2\sum_{i=1}^{n}\|x_{i,k}-\tilde{x}_k\|_2^2 + 8nL(F(\tilde{x}_k)-F(x^*))\nonumber\\
  &\quad +4\sum_{i=1}^{n} \|\nabla f_i(x^*)\|_2^2
\end{align}

Then subtracting $F(x^*)$ on both sides of \eqref{E_F}, and plugging \eqref{F_g}--\eqref{f_2} into \eqref{E_F}, we arrive at
\begin{small}
\begin{align}\label{F_F^*}
  &\mathbb{E}[F(\tilde{x}_{k+1})-F(x^*)|\mathcal{F}_k] \nonumber\\
  &\leq F(\tilde{x}_k)-F(x^*) -\frac{\alpha_k-L\alpha_k^2}{2}\|\nabla \tilde{f}_k\|_2^2- \frac{\alpha_k}{2}\|\nabla F(\tilde{x}_k)\|_2^2 \nonumber\\
  & \quad + \alpha_k\|\nabla
   \bar{f}_k-\nabla F(\tilde{x}_k)\|_2^2+\alpha_k\|\nabla
   \tilde{f}_k-\nabla \bar{f}_k\|_2^2 + L\sigma\|\mathbf{r}\|_2^2\beta_k^2\nonumber\\
   & \leq\left( 1+8\alpha_kL\delta_k^2 \right) (F(\tilde{x}_k)-F(x^*) ) -\frac{\alpha_k-L\alpha_k^2}{2}\|\nabla \tilde{f}_k\|_2^2\nonumber\\ &\quad-\frac{\alpha_k}{2}\|\nabla F(\tilde{x}_k)\|_2^2  + \frac{\alpha_k L^2+2\alpha_kL^2\delta_k^2}{n} \|\mathbf{x}_k-\tilde{\mathbf{x}}_k\|_{A}^2 \nonumber\\
  &\quad+ \frac{\alpha_k\delta_k^2}{n}\|\nabla \mathbf{f}^*\|_2^2 +L\sigma\|\mathbf{r}\|_2^2\beta_k^2.
\end{align}
\end{small}

\textbf{Step 2:} Recursion of $\mathbb{E}[\|\mathbf{x}_{k+1}-\tilde{\mathbf{x}}_{k+1}\|_{{A}}^2|\mathcal{F}_k]$.

Left multiplying $\Pi_r:=I-\mathbf{1}\mathbf{r}^{\top}$ on the compact form of \eqref{re_3c}, we have the following relation:
\begin{eqnarray}\label{s2_1}
  \hspace{-7pt}&&\hspace{-7pt}\|\mathbf{x}_{k+1}-\tilde{\mathbf{x}}_{k+1}\|_{{A}}^2\nonumber\\ \hspace{-7pt}&=&\hspace{-7pt}\|(W_k-\mathbf{1}\mathbf{r}^{\top})(\mathbf{x}_k-\tilde{\mathbf{x}}_k)+\beta_k\Pi_r\mathbf{e}_k^x-\Pi_{r}\Delta \mathbf{z}_{k+1}\|_{{A}}^2\nonumber\\
  \hspace{-7pt}&=&\hspace{-7pt}\|(W_k-\mathbf{1}\mathbf{r}^{\top})(\mathbf{x}_k-\tilde{\mathbf{x}}_k)-\beta_k\Pi_r A (\mathbf{z}_k-\tilde{\mathbf{z}}_k)\nonumber\\
  \hspace{-7pt}&&\hspace{-7pt} -\alpha_k\Pi_r \kappa_k\nabla \mathbf{f}_k + \beta_k\Pi_r\left( \mathbf{e}_k^x-\mathbf{e}_k^z\right)\|_{{A}}^2.
\end{eqnarray}

Since $\|W_k-\mathbf{1}\mathbf{r}^{\top}\|_{{A}}\leq1-\beta_k\rho_{A}$, it has
\begin{align}\label{s2_2}
  \hspace{-2pt}&\left\| (W_k\hspace{-2pt}-\hspace{-2pt}\mathbf{1}\mathbf{r}^{\top})(\mathbf{x}_k\hspace{-2pt}-\hspace{-2pt}\tilde{\mathbf{x}}_k)\hspace{-1pt}-\hspace{-1pt}\beta_k\Pi_r A (\mathbf{z}_k\hspace{-1pt}-\hspace{-1pt}\tilde{\mathbf{z}}_k)
   \hspace{-1pt}-\hspace{-1pt}\alpha_k\Pi_r \kappa_k\nabla \mathbf{f}_k \right\|_{{A}}^2 \nonumber   \\
   \hspace{-2pt}&\leq (1-\beta_k\rho_A)\|\mathbf{x}_k-\tilde{\mathbf{x}}_k\|_{{A}}^2\nonumber\\
   \hspace{-2pt}&\quad + \frac{1}{\beta_k\rho_A}\left\| \beta_k\Pi_r A (\mathbf{z}_k-\tilde{\mathbf{z}}_k)
   +\alpha_k\Pi_r \kappa_k\nabla \mathbf{f}_k \right\|_{{A}}^2\nonumber\\
   \hspace{-2pt}&\leq (1-\beta_k\rho_A)\|\mathbf{x}_k-\tilde{\mathbf{x}}_k\|_{{A}}^2 +\frac{2\beta_k}{\rho_A}\|\Pi_r A\|_{{A}}^2 \|\mathbf{z}_k-\tilde{\mathbf{z}}_k\|_{{A}}^2\nonumber\\
   \hspace{-2pt}&\quad+ \frac{2\alpha_k^2}{\beta_k\rho_A}\|\Pi_r\kappa_k\|_{{A}}^2 \|\nabla\mathbf{f}_k\|_{{A}}^2,
\end{align}
wherein, the relation  
$\|\mathbf{x}_1+\mathbf{x}_2\|^2\leq (1+\epsilon)\|\mathbf{x}_1\|^2+(1+\frac{1}{\epsilon})\|\mathbf{x}_2\|^2$ is used for any $\mathbf{x}_1$, $\mathbf{x}_2\in\mathbb{R}^d$ and $\epsilon>0$. Specifically, we select $\epsilon=\frac{\beta_k\rho_A}{1-\beta_k\rho_A}$ and $\epsilon=1$ to obtain the first and second inequalities, respectively. 

By taking the expectation on both sides of \eqref{s2_1}, the preceding relationships from \eqref{s2_1}-\eqref{s2_2} together with \eqref{f_2} lead to
\begin{align}\label{s2_5}
  &\mathbb{E}[\|\mathbf{x}_{k+1}-\tilde{\mathbf{x}}_{k+1}\|_{{A}}^2|\mathcal{F}_k]\nonumber\\
  &\leq \left(1-\beta_k\rho_A+\frac{4L^2c_{{A},2}^2\alpha_k^2}{\beta_k\rho_A}\|\Pi_r\kappa_k\|_{{A}}^2\right) \|\mathbf{x}_k-\tilde{\mathbf{x}}_k\|_{{A}}^2 \nonumber\\ 
  &\quad + \frac{16nLc_{A,2}^2\alpha_k^2}{\beta_k\rho_A}\|\Pi_r\kappa_k\|_{{A}}^2 (F(\tilde{x}_k)-F(x^*)) \nonumber\\ 
  &\quad+\frac{2\beta_k}{\rho_A}\|\Pi_r A\|_{{A}}^2\|\mathbf{z}_k-\tilde{\mathbf{z}}_k\|_{{A}}^2\nonumber\\
  &\quad+\frac{8c_{A,2}^2\alpha_k^2}{\beta_k\rho_A}\|\Pi_r \kappa_k\|_{{A}}^2 \|\nabla \mathbf{f}^*\|_2^2  +2\beta_k^2c_{A,2}^2\|\Pi_r\|_{{A}}^2 n\sigma,
\end{align}
wherein, Lemma~\ref{lemma1} is used to transform the relations between the norm $\|\cdot\|_A$ and $\|\cdot\|_2$.

\textbf{Step 3:} Recursion of $\mathbb{E}[\|\mathbf{z}_{k+1}-\tilde{\mathbf{z}}_{k+1}\|_{{A}}^2|\mathcal{F}_k]$.

By \eqref{re_3b}, $\mathbf{z}_{k+1}-\tilde{\mathbf{z}}_{k+1}$ is written by
\begin{eqnarray}\label{theorem2_eq15}
 \hspace{-11pt}&&\hspace{-7pt}\mathbf{z}_{k+1}-\tilde{\mathbf{z}}_{k+1} \nonumber\\ \hspace{-11pt}&=&\hspace{-7pt}{W}_k\mathbf{z}_k+\beta_k\mathbf{e}_k^z+\alpha_k\kappa_k\nabla \mathbf{f}_k-\tilde{\mathbf{z}}_k -\beta_k\mathbf{1}\mathbf{r}^{\top}\mathbf{e}_k^z-\alpha_k\mathbf{1}\mathbf{r}^{\top}\kappa_k\nabla \mathbf{f}_k\nonumber\\
  \hspace{-11pt}&=&\hspace{-7pt}(W_k-\mathbf{1}\mathbf{r}^{\top})(\mathbf{z}_{k}-\tilde{\mathbf{z}}_{k})+\beta_k\Pi_r\mathbf{e}_k^z+\alpha_k\Pi_r \kappa_k\nabla \mathbf{f}_k.
\end{eqnarray}
Further, applying again the contraction bound of $W_k-\mathbf 1 \mathbf r^{\top}$, the above Young-type inequality $\|\mathbf{x}_1+\mathbf{x}_2\|^2\leq (1+\epsilon)\|\mathbf{x}_1\|^2+(1+\frac{1}{\epsilon})\|\mathbf{x}_2\|^2$ with $\epsilon=\frac{\beta_k\rho_A}{1-\beta_k\rho_A}$, and \eqref{f_2}, we obtain
\begin{eqnarray}\label{theorem2_eq17}
  \hspace{-10pt}\mathbb{E}\hspace{-7pt}&&\hspace{-11pt}[\|\mathbf{z}_{k+1}-\tilde{\mathbf{z}}_{k+1}\|_{{A}}^2 |\mathcal{F}_k] 
\leq  \left(1-\beta_k\rho_A\right)\|\mathbf{z}_{k}-\tilde{\mathbf{z}}_{k}\|_{{A}}^2\nonumber\\
  \hspace{-7pt}&&\hspace{-7pt}\quad+\frac{2L^2 c_{{A},2}^2\alpha_k^2 }{\beta_k\rho_A}\|\Pi_r\kappa_k\|_{{A}}^2\|\mathbf{x}_k-\tilde{\mathbf{x}}_k\|_{{A}}^2 \nonumber\\
  \hspace{-7pt}&&\hspace{-7pt}\quad +\frac{8nLc_{A,2}^2\alpha_k^2}{\beta_k\rho_A}\|\Pi_r\kappa_k\|_{{A}}^2 (F(\tilde{x}_k)-F(x^*)) \nonumber\\
  \hspace{-7pt}&&\hspace{-7pt}\quad + \frac{4c_{A,2}^2\alpha_k^2}{\beta_k\rho_A}\|\Pi_r\kappa_k\|_{{A}}^2 \|\nabla \mathbf{f}^*\|_2^2 +\beta_k^2\|\Pi_r\|_{{A}}^2 c_{{A},2}^2 n\sigma.
\end{eqnarray}

At this stage, by combining the inequalities in \eqref{F_F^*}, \eqref{s2_5} and \eqref{theorem2_eq17} together, we have the linear system of inequalities as shown in \eqref{lemma_64}.
\end{proof}

\section{Proof of \texorpdfstring{Lemma~\ref{lem:scalar_recursion_estimate}}{the scalar-recursion lemma}}\label[appendix]{app:proof-scalar}
\begin{proof}
Let $q:=s-a>0$ and $M>0$ such that $M\vartheta>C$. Since the recursion of $\psi_{k}$ is compressed, one has that $\psi_{T_0}\le\frac{M}{(T_0+1)^q}$ holds at some finite time $T_0$. We next prove by induction that $\psi_k\leq\frac{M}{(k+1)^q}, \forall k\ge T$ for some sufficiently large $T$. 

By the mean value theorem applied to \(f(x)=(x+1)^{-q}\), there exists some \(\xi_k\in(k,k+1)\) such that
\begin{eqnarray}\label{mean_value}
\hspace{-10pt}\frac{1}{(k+1)^q}-\frac{1}{(k+2)^q}
\hspace{-7pt}&=&\hspace{-7pt}
q(\xi_k+1)^{-q-1}
\le
\frac{q}{(k+1)^{q+1}}.
\end{eqnarray}

Since $s=q+a<q+1$ and $M\vartheta>C$, there exists $T_1\ge T_0$ such that for all $k\ge T_1$,
\begin{eqnarray}\label{r_q+1}
\frac{M\vartheta-C}{(k+1)^s}\hspace{-7pt}&\ge&\hspace{-7pt}
\frac{Mq}{(k+1)^{q+1}}.
\end{eqnarray}
Therefore, by \eqref{mean_value} and \eqref{r_q+1}, it has that for all $k\ge T_1$, 
\begin{eqnarray*}
\psi_{k+1}\hspace{-7pt}&\le&\hspace{-7pt}
\left(1-\frac{\vartheta}{(k+1)^a}\right)\frac{M}{(k+1)^q}
+\frac{C}{(k+1)^s}\nonumber\\
\hspace{-7pt}&\le&\hspace{-7pt}
\frac{M}{(k+1)^q}
-\left(\frac{M}{(k+1)^q}-\frac{M}{(k+2)^q}\right)=
\frac{M}{(k+2)^q}.
\end{eqnarray*}
This ends the induction and thus the proof is completed.
\end{proof}

\end{document}